\def\tank#1{\protected@xdef\@thanks{\@thanks
        \protect\footnotetext[0]{#1}}}
\def\bigfoot{

    \@footnotetext}
\newcommand{\ea}{\end{array}}
\newtheorem{theorem}{Theorem}[section]
\newtheorem{proposition}{Proposition}[section]
\newtheorem{claim}{Claim}[section]
\newtheorem{lemma}{Lemma}[section]
\newtheorem{definition}{Definition}[section]
\newtheorem{remark}{Remark}[section]
\newenvironment{proof}{Proof.}
\def \eqref#1{\hbox{(\ref{#1})}}
\begin{document}
\title{\Large \bf Stochastic generalized porous media equations driven by L\'{e}vy noise with increasing Lipschitz nonlinearities\thanks{Weina Wu's research is supported by National Natural Science Foundation of China (NSFC) (No. 11901285, 11771187), China Scholarship Council (CSC) (No. 202008320239), National Statistical Science Research Project of China (No. 2018LY28), School Start-up Fund of Nanjing University of Finance and Economics (NUFE), Support Programme for Young Scholars of NUFE.
Jianliang Zhai's research is supported by NSFC (No. 11971456, 11671372, 11721101), School Start-up Fund (USTC) KY0010000036, the Fundamental Research Funds for the Central Universities (No. WK3470000016).} }

\author{{Weina Wu}$^a$\footnote{E-mail:wuweinaforever@163.com}~~~ {Jianliang Zhai}$^b$\footnote{E-mail:zhaijl@ustc.edu.cn}
\\
 \small  a. School of Economics, Nanjing University of Finance and Economics, Nanjing, Jiangsu 210023, China.\\
 \small  b.  CAS Wu Wen-Tsun Key Laboratory of Mathematics, School of Mathematical Sciences,\\
 \small University of Science and Technology of China, Hefei, Anhui 230026, China.}\,
\date{}
\maketitle

\begin{center}
\begin{minipage}{130mm}
{\bf Abstract.}
We establish the existence and uniqueness of strong solutions to stochastic porous media equations driven by L\'{e}vy noise on a $\sigma$-finite measure space $(E,\mathcal{B}(E),\mu)$, and with the Laplacian replaced by a negative definite self-adjoint operator. The coefficient $\Psi$ is only assumed to satisfy the increasing Lipschitz nonlinearity assumption, without the restriction $r\Psi(r)\rightarrow\infty$ as $r\rightarrow\infty$ for $L^2(\mu)$-initial data. We also extend the state space, which avoids the transience assumption on $L$ or the boundedness of $L^{-1}$ in $L^{r+1}(E,\mathcal{B}(E),\mu)$ for some $r\geq1$. Examples of the negative definite self-adjoint operators include fractional powers of the Laplacian, i.e. $L=-(-\Delta)^\alpha,\ \alpha\in(0,1]$, generalized Schr\"{o}dinger operators, i.e. $L=\Delta+2\frac{\nabla \rho}{\rho}\cdot\nabla$, and Laplacians on fractals.

\vspace{3mm} {\bf Keywords.} Stochastic porous media equation; L\'{e}vy noise; Sub-Markovian; Strongly continuous contraction semigroup.

\end{minipage}
\end{center}

\section{Introduction}
\setcounter{equation}{0}
 \setcounter{definition}{0}
Fix $T>0$. Let $(E,\mathcal{B})$ be a standard measurable space (see \cite{P67}) with a $\sigma$-finite measure $\mu$. Let $(\Omega,\mathcal{F},\mathbb{F},\Bbb{P})$, where $\mathbb{F}:=\{\mathcal{F}_t\}_{t\in[0,T]}$, be a complete filtered probability space satisfying the usual condition. We shall denote by $\mathcal{B}\mathcal{F}$ the $\sigma$-field of the progressively measurable sets on $[0,T]\times\Omega$, i.e.,
\begin{eqnarray*}
\mathcal{B}\mathcal{F}=\{A\subset[0,T]\times\Omega:\ \forall t\in[0,T],\ A\cap([0,t]\times\Omega)\in\mathcal{B}([0,t])\otimes\mathcal{F}_t\},
\end{eqnarray*}
where $\mathcal{B}([0,t])$ denotes the Borel $\sigma$-field on $[0,t]$. Let $(Z, \mathcal{Z})$ be a measurable space, and $\nu$ be a $\sigma$-finite positive measure on it. We assume that $N$ is a time-homogeneous Poisson random measure on $[0,T]\times Z$ with the intensity measure $\lambda_T\otimes\nu$ on $(\Omega,\mathcal{F},\mathbb{F},\Bbb{P})$, where $\lambda_T$ is the Lebesgue measure on $[0,T]$. We define the compensated Poisson random measure $\widetilde{N}$ by
\begin{eqnarray*}
\widetilde{N}((0,t]\times B)=N((0,t]\times B)-t\nu(B),\ \forall t\in[0,T],\ B\in\mathcal{Z}\text{ with }\nu(B)<\infty.
\end{eqnarray*}

The purpose of this paper is to establish the existence and uniqueness of strong solutions to the following stochastic generalized porous media equations driven by L\'{e}vy process:
\begin{equation} \label{eq:1}
\left\{ \begin{aligned}
&dX(t)=L\Psi(X(t))dt+\int_{Z}f(t,X(t-),z)\widetilde{N}(dt,dz),\ \text{in}\ [0,T]\times E,\\
&X(0)=x \text{~on~} E,
\end{aligned} \right.
\end{equation}
where $L$ is the infinitesimal generator of a symmetric sub-Markovian strongly continuous contraction semigroup $(P_t)_{t\geq0}$ on $L^2(\mu):=L^2(E,\mathcal{B}(E),\mu)$.
$\Psi(\cdot):\Bbb{R}\rightarrow\Bbb{R}$ is a monotonically nondecreasing Lipschitz continuous function. $f:[0,T]\times \Omega\times F^*_{1,2}\times Z\rightarrow F^*_{1,2}$ is a $\mathcal{B}\mathcal{F}\otimes\mathcal{B}(F^*_{1,2})\otimes\mathcal{Z}$-measurable function. For the definition of the Hilbert space
$F^*_{1,2}$ and the precise conditions on $\Psi$ and $f$, we refer to Section 2 and Section 3 respectively.

The study of porous media equations has attained much interest in recent years. Recall that the classical porous media equation reads
$$dX(t)=\Delta X^m(t)dt$$
on a domain in $\Bbb{R}^d$, we refer to \cite{A} for its mathematical treatment and the physical background, and also \cite[Section 4.3]{B} for the general theory of equations of such type. Stochastic porous media equations (SPMEs) have been intensively discussed since the foundational work in \cite{DR, DR04}. Meanwhile, there are plenty results about the existence and uniqueness of solutions and their long-time behaviors of SPMEs driven by Wiener process on general measure spaces (\cite{BDPR04, DRRW06, RRW, RW, RW07, RWW, RWX, RWX1, RWZ, W, WZ}). However, to the best of our knowledge, there seem to be very few results about SPMEs driven by L\'{e}vy-type or Poission-type perturbations on general measure spaces. Hou and Zhou investigated the existence and uniqueness of solutions to SPMEs driven by L\'{e}vy noise on a separable probability space in \cite{ZH} in a variational setting, following the approach of \cite{RRW}. Based on the methods used in \cite{ZH}, the ergodicity and the exponential stability of the same equation were obtained in \cite{ZH1} and \cite{GZ} respectively. In our framework, we consider \eqref{eq:1} in $\sigma$-finite measurable spaces. We would also like to emphasize that the state space we work in is $F^*_{1,2}$, which is larger than the dual space of the extended Dirichlet space considered in \cite{RRW, ZH, ZH1, GZ}, hence we can allow more general initial conditions. In our case, we do not need the transience assumption of the Dirichlet form as in \cite{RRW} or the boundedness of $L^{-1}$ in $L^{r+1}(E,\mathcal{B}(E),\mu)$ for some $r\geq1$ as in \cite{ZH, ZH1, GZ}. In addition, in \cite{RRW, ZH, ZH1, GZ}, $\Psi$ is assumed to be continuous such that $r\Psi(r)\rightarrow\infty$ as $r\rightarrow\infty$. In this paper, we show that for Lipschitz continuous $\Psi$ this condition can be dropped for $L^2(\mu)$-initial data.

The main contribution of this work is that we obtain the existence and uniqueness of strong solutions to \eqref{eq:1} driven by L\'{e}vy noise, which generalizes many previous works \cite{BRR, RRW, RW, RWX, ZH}. This work is inspired by the recent paper \cite{RWX}, in which the first author together with R\"{o}ckner and Xie proved the existence and uniqueness of strong solutions to \eqref{eq:1} driven by Wiener process. In this paper, we will follow the approximating techniques in \cite{RWX} and the local monotonicity arguments (cf. \cite{BLZ}). However, since we consider \eqref{eq:1} driven by L\'{e}vy noise, our proofs are more involved with a substantial number of obstacles to overcome, which do not occur in \cite{RWX}.

We use the same assumption of $\Psi$ (see \textbf{(H1)} below in Section 3) as in \cite{BRR, RWX}, the obvious and useful consequences are \eqref{eqn38} and \eqref{eqn40}, which play crucial technical roles to prove the local monotonicity in Step 1 (see pg:7 and \cite[Step 1 (ii)]{RWX}), to obtain \eqref{eqn25}, \eqref{eqn381} and so on. Compared with \cite{RRW}, $\Psi(x)=x^3$ can not be covered in this paper or \cite{BRR, RWX}, but since we work in larger state space $F^*_{1,2}$ as in \cite{RWX}, the transience assumption of the Dirichlet form can be dropped, in particular, we do not need any restriction on $d$ when $E=\Bbb{R}^d$ and $L=(-\Delta)^\alpha, \alpha\in(0,1]$. The assumptions of $(E,\mathcal{B}(E),\mu)$ and $(L,D(L))$ are the same as in \cite{RWX}, and $(L,D(L))$ is indeed the associated Dirichlet operator on $L^2(\mu)$ (cf. \cite{MR}). There are plenty examples of $(E,\mathcal{B}(E),\mu)$ and $L$ that can not be covered by \cite{ZH, ZH1, GZ}, in which the authors considered the equations on separable probability spaces. For example, let $E:=U\subset \Bbb{R}^d$, $U$ open, and $m$ a positive Radon measure on $U$ such that $supp[m]=U$, using Dirichlet form theory, one can define its associated Dirichlet operator $L$ on $L^2(U,m)$. If $L$ is the Friedrichs extension of the operator $L_0=\Delta+2\frac{\nabla \rho}{\rho}\cdot\nabla$ on $L^2(\Bbb{R}^d, \rho^2dx)$, where $dx$ denotes Lebesgue measure and $\rho\in H^1(\Bbb{R}^d)$ ($H^1$ is the usual Sobelev space), one can prove there exists a Dirichlet form on $L^2(\Bbb{R}^d, \rho^2dx)$ and $L$ is the corresponding Dirichlet operator. Furthermore, if $E$ is a fractal, we can take $L$ to be the Laplace operator on this fractal. We refer to \cite[Section 4]{RWX} and references therein for more examples and explicit verifications of the above $E$ and $L$.

Finally, we would like to refer \cite{LR, LR1, P, PR} for more background information and results on SPDEs, \cite{A, BDR} on SPMEs, \cite{RRW, RW, RWW, RWX, RWX1} and the references therein for comprehensive theories of stochastic generalized porous media equations.

The paper is organized as follows: in Section 2, we introduce some notations and recall some known results for preparation. In Section 3, we present our assumptions and the main result. In Section 4, we construct approximated equations for $L$ and $\Psi$, and by using the local monotonicity arguments, we show the existence and uniqueness of solutions to the approximated equations. We also obtain some a priori estimates for those approximated solutions. Section 5 is devoted to prove that the limit of those approximated solutions solves \eqref{eq:1}.

\section{Notations and Preliminaries}
\setcounter{equation}{0}
 \setcounter{definition}{0}

Let us first recall some basic definitions and spaces which will be used throughout the paper (see \cite{RWX, WZ}).

Let $(E,\mathcal{B},\mu)$ be a $\sigma$-finite measurable space, which we fix in the entire paper. We assume that $(E, \mathcal{B})$ is a standard measurable space (i.e., $\sigma$-isomorphic to a Polish space, see \cite{P67}). Let $(P_t)_{t\geq0}$ be a strongly continuous, symmetric sub-Markovian
contraction semigroup on $L^2(\mu)$ with generator $(L, D(L))$. The $\Gamma$-transform of $(P_t)_{t\geq0}$ is defined by the following Bochner integral (\cite{HK})
\begin{eqnarray}\label{eqnarray1}
V_ru:=\frac{1}{\Gamma(\frac{r}{2})}\int_0^\infty t^{\frac{r}{2}-1}e^{-t}P_tudt,~~u\in L^2(\mu),~~r>0.
\end{eqnarray}
In this paper, we consider the Hilbert space $(F_{1,2},\|\cdot\|_{F_{1,2}})$ defined by
$$
F_{1,2}:=V_1(L^2(\mu)),~\text{with~norm}~\|f\|_{F_{1,2}}=|u|_2,~~\text{for}~~f=V_1u,~~ u\in L^2(\mu),
$$
where the norm $|\cdot|_2$ is defined as $|u|_2=(\int_E |u|^2d\mu)^{\frac{1}{2}}$, and its inner product is denoted by $\langle \cdot, \cdot\rangle_2$.
From \cite{FJS1,FJS2}, we know
$$
V_1=(1-L)^{-\frac{1}{2}},~~\text{so~that}~~F_{1,2}=D\big((1-L)^{\frac{1}{2}}\big)~~\text{and}~~\|f\|_{F_{1,2}}=|(1-L)^{\frac{1}{2}}f|_2.
$$
The dual space of $F_{1,2}$ is denoted by $F^*_{1,2}$ and $F^*_{1,2}=D((1-L)^{-\frac{1}{2}})$, it is equipped with norms
\begin{eqnarray}\label{eqnarray5}
\|\eta\|_{F^*_{1,2,\varepsilon}}:=\langle \eta, (\varepsilon-L)^{-1}\eta\rangle_2^{\frac{1}{2}},~~\eta\in F^*_{1,2},~~0<\varepsilon<\infty.
\end{eqnarray}
Denote the duality between $F^*_{1,2}$ and $F_{1,2}$ by
$_{F^*_{1,2}}\langle \cdot,\cdot\rangle_{F_{1,2}}$.

\vspace{2mm}
Consider the Dirichlet form $(\mathscr{E}, D(\mathscr{E}))$ on
$L^2(\mu)$ associated with $(L, D(L))$, i.e.,
\begin{eqnarray*}
   && D(\mathscr{E}):= F_{1,2},~~ \text{and} \\
   && \mathscr{E}(u,v):=\mu(\sqrt{-L}u \sqrt{-L}v):=\int_E\sqrt{-L}u \sqrt{-L}vd\mu,~~ u,v\in F_{1,2}.
\end{eqnarray*}
Let $D(\mathscr{E})$ be equipped with the inner product
$\mathscr{E}_1:=\mathscr{E}+\langle\cdot,\cdot\rangle_2$. Consider the inner product
$\mathscr{E}_\varepsilon:=\mathscr{E}+\varepsilon \langle\cdot, \cdot \rangle_2,~ \varepsilon\in(0,\infty)$, on
$F_{1,2}$, i.e.,
\begin{eqnarray*}\label{eqnal101}
  \|v\|^2_{F_{1,2,\varepsilon}}:=\mathscr{E}(v,v)+\varepsilon\int|v|^2d\mu,\ \text{for}\ v\in F_{1,2},
\end{eqnarray*}
and
\begin{eqnarray*}\label{eqnal202}
 \|\eta\|_{F^*_{1,2,\varepsilon}}:=_{F^*_{1,2}}\langle \eta, (\varepsilon-L)^{-1}\eta\rangle^{\frac{1}{2}}_{F_{1,2}}:=\sup_{\substack{v\in
F_{1,2}\\ \|v\|_{F_{1,2,\varepsilon}}\leq1}}\eta(v),~\eta\in F_{1,2}^*.
\end{eqnarray*}
The sequence of norms $\|\cdot\|_{F^*_{1,2,\varepsilon}}$, $0<\varepsilon<\infty$, are in fact equivalent. To keep the completeness of presentation, we cite its proof from a forthcoming paper \cite[Proposition 2.1]{RWX1}, which is written by the first author, R\"{o}ckner and Xie.
\begin{proposition}\label{pro1}
Let $\eta\in F^*_{1,2}$. Then $\varepsilon\mapsto\|\eta\|_{F^*_{1,2,\varepsilon}}$ is decreasing, and
\begin{eqnarray}
&&\|\eta\|_{F^*_{1,2}}\leq \|\eta\|_{F^*_{1,2,\varepsilon}}\leq
\frac{1}{\sqrt{\varepsilon}}\|\eta\|_{F^*_{1,2}},~~\forall~0<\varepsilon<1.\label{eqn22}
\end{eqnarray}
\end{proposition}
\begin{proof}
Note that for all $\eta\in F_{1,2}^*$ and $0<\varepsilon'\leq \varepsilon<\infty$, we have
\begin{eqnarray*}
\|\eta\|_{F^*_{1,2,\varepsilon}}=:
\sup_{\substack{{v\in
F_{1,2}}\\{\|v\|_{F_{1,2,\varepsilon}\leq1}}}}\eta(v)\leq \sup_{\substack{{v\in
F_{1,2}}\\{\|v\|_{F_{1,2,\varepsilon'}\leq1}}}}\eta(v)=\|\eta\|_{F^*_{1,2,\varepsilon'}},
\end{eqnarray*}
i.e., $\forall~\eta\in F^*_{1,2}$, $\|\eta\|_{F^*_{1,2,\varepsilon}}$ is decreasing in $\varepsilon$. In particular, the first inequality in \eqref{eqn22} holds. For $0<\varepsilon<1$, one has that
\begin{eqnarray*}
\|\eta\|_{F^*_{1,2}}=\sup_{\|v\|_{F_{1,2}}\leq1}\eta(v)
=\sqrt{\varepsilon}\sup_{\sqrt{\varepsilon}\|v\|_{F_{1,2}}\leq1}\eta(v)
\geq\sqrt{\varepsilon}\sup_{\|v\|_{F_{1,2,\nu}}\leq1}\eta(v)
=\sqrt{\varepsilon}\|\eta\|_{F^*_{1,2,\varepsilon}},
\end{eqnarray*}
which yields the second inequality in \eqref{eqn22}. Similarly, we can also obtain that
\begin{eqnarray}\label{equivalent norm 2}
&&\frac{1}{\sqrt{\varepsilon}}\|\eta\|_{F^*_{1,2}}\leq \|\eta\|_{F^*_{1,2,\varepsilon}}\leq
\|\eta\|_{F^*_{1,2}},~~\forall~1\leq\varepsilon<\infty.
\end{eqnarray}
\hspace{\fill}$\Box$
\end{proof}

\begin{remark}
This property will be used in the proofs of Claim \ref{claim2}, Claim \ref{claim3} and Proposition \ref{Th2}. By considering the proofs in $(F^*_{1,2},\|\cdot\|_{F^*_{1,2,\varepsilon}})$ instead of $(F^*_{1,2},\|\cdot\|_{F^*_{1,2}})$, we can avoid some technical obstacles. Taking Claim \ref{claim2} for example, if we apply It\^{o}'s formula to $\|X^\varepsilon_\lambda(t)-X^\varepsilon_{\lambda'}(t)\|^2_{F^*_{1,2}}$, then we obtain
\begin{eqnarray}\label{eq:1.1}
\!\!\!\!\!\!\!\!&&\|X^\varepsilon_\lambda(t)-X^\varepsilon_{\lambda'}(t)\|^2_{F^*_{1,2}}\nonumber\\
\!\!\!\!\!\!\!\!&&+2\int_0^t\big\langle\Psi(X^\varepsilon_\lambda(s))-\Psi(X^\varepsilon_{\lambda'}(s))+\lambda X^\varepsilon_\lambda(s)-\lambda'X^\varepsilon_{\lambda'}(s),X^\varepsilon_\lambda(s)-X^\varepsilon_{\lambda'}(s)\big\rangle_2ds\nonumber\\
=\!\!\!\!\!\!\!\!&&2\int_0^t\big\langle (1-\varepsilon)\big(\Psi(X^\varepsilon_\lambda(s))-\Psi(X^\varepsilon_{\lambda'}(s))+\lambda X^\varepsilon_\lambda(s)-\lambda'X^\varepsilon_{\lambda'}(s)\big),X^\varepsilon_\lambda(s)-X^\varepsilon_{\lambda'}(s)\big\rangle_{F^*_{1,2}}ds\nonumber\\
\!\!\!\!\!\!\!\!&&+\int_0^t\int_{Z}\big\|f(s,X^\varepsilon_\lambda(s-),z)-f(s,X^\varepsilon_{\lambda'}(s-),z)\big\|^2_{F^*_{1,2}}N(ds,dz)\nonumber\\
\!\!\!\!\!\!\!\!&&+2\int_0^t\!\!\int_{Z}\!\!\big\langle X^\varepsilon_\lambda(s-)-X^\varepsilon_{\lambda'}(s-),f(s,X^\varepsilon_\lambda(s-),z)-f(s,X^\varepsilon_{\lambda'}(s-),z)\big\rangle_{F^*_{1,2}}\widetilde{N}(ds,dz),
\end{eqnarray}
compared with \eqref{eqn18}, we need to estimate one more term
$$2\int_0^t\big\langle (1-\varepsilon)\big(\Psi(X^\varepsilon_\lambda(s))-\Psi(X^\varepsilon_{\lambda'}(s))+\lambda X^\varepsilon_\lambda(s)-\lambda'X^\varepsilon_{\lambda'}(s)\big),X^\varepsilon_\lambda(s)-X^\varepsilon_{\lambda'}(s)\big\rangle_{F^*_{1,2}}ds.$$
However, we cannot find an appropriate method to estimate this term to obtain the desired inequality as \eqref{eqn25}. Similar arguments are used in Claim \ref{claim3}, Proposition \ref{Th2}, and also \cite{BRR, RWX, WZ}.
\end{remark}

Let $H$ be a separable Hilbert space with inner product $\langle\cdot,\cdot\rangle_H$ and $H^*$ its dual. Let $V$ be a reflexive Banach space such that $V\subset H$ continuously and densely. Then for its dual space $V^*$ it follows that $H^*\subset V^*$ continuously and densely. Identifying $H$ and $H^*$ via the Riesz isomorphism we have that
$$V\subset H\subset V^*$$
continuously and densely. If $_{V^*}\langle\cdot,\cdot\rangle_V$ denotes the dualization between $V^*$ and $V$ (i.e. $_{V^*}\langle z,v\rangle_V:=z(v)$ for $z\in V^*, v\in V$), it follows that
\begin{eqnarray}\label{eqn6}
_{V^*}\langle z,v\rangle_V=\langle z,v\rangle_H,~~\text{for~all}~z\in H,~~v\in V.
\end{eqnarray}
$(V,H,V^*)$ is called a Gelfand triple.

In \cite{RWX}, the authors constructed a Gelfand triple with $V=L^2(\mu)$ and $H=F^*_{1,2}$, the Riesz map which identifies $F_{1,2}$ and $F^*_{1,2}$ is $(1-L)^{-1}: F^*_{1,2}\rightarrow F_{1,2}$.

We need the following lemma which was proved in \cite{RWX}.
\begin{lemma}
The map
$$1-L:F_{1,2}\rightarrow F_{1,2}^*$$
extends to a linear isometry
$$1-L:L^2(\mu)\rightarrow(L^2(\mu))^*,$$
and for all $u,v\in L^2(\mu)$,
\begin{eqnarray}\label{eqn7}
_{(L^2(\mu))^*}\langle(1-L)u, v\rangle_{L^2(\mu)}=\int_Eu\cdot
v~d\mu.
\end{eqnarray}
\end{lemma}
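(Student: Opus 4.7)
The plan is to reduce the statement to a direct calculation identifying the extended map with the canonical Riesz isometry of $L^2(\mu)$ with its dual, after composing with the natural embedding $F^*_{1,2}\hookrightarrow(L^2(\mu))^*$ from the Gelfand triple $L^2(\mu)\subset F^*_{1,2}\subset(L^2(\mu))^*$ constructed in \cite{RWX}.

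The key observation is that, under this Gelfand embedding, the pairing of $\eta\in F^*_{1,2}$ with $v\in L^2(\mu)$ as elements of $(L^2(\mu))^*$ and $L^2(\mu)$ equals the $F^*_{1,2}$ inner product $\langle\eta,v\rangle_{F^*_{1,2}}$, by the compatibility \eref{eqn6}. Applying this to $\eta=(1-L)u$ with $u\in F_{1,2}$ and unwinding the definition of $\|\cdot\|_{F^*_{1,2}}$ from \eref{eqnarray5} at $\varepsilon=1$ gives
\begin{eqnarray*}
{}_{(L^2(\mu))^*}\langle(1-L)u,v\rangle_{L^2(\mu)}
&=&\langle(1-L)u,v\rangle_{F^*_{1,2}}\\
&=&\langle(1-L)^{-1/2}(1-L)u,(1-L)^{-1/2}v\rangle_2\\
&=&\langle(1-L)^{1/2}u,(1-L)^{-1/2}v\rangle_2\\
&=&\langle u,v\rangle_2=\int_E u\cdot v~d\mu,
\end{eqnarray*}
where the last line uses the self-adjointness of the fractional powers of $1-L$ on $L^2(\mu)$ and $(1-L)^{1/2}(1-L)^{-1/2}v=v$ for $v\in L^2(\mu)$.

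I would then define the extension $1-L:L^2(\mu)\to(L^2(\mu))^*$ to be the map $u\mapsto[v\mapsto\int_E u\cdot v~d\mu]$. By the Riesz representation theorem for $L^2(\mu)$ this is a linear isometry with $\|(1-L)u\|_{(L^2(\mu))^*}=|u|_2$. The computation above ensures it agrees on $F_{1,2}$ with the original Riesz map $1-L:F_{1,2}\to F^*_{1,2}$ composed with the Gelfand embedding, so it is a genuine extension; since $F_{1,2}=D((1-L)^{1/2})$ is dense in $L^2(\mu)$ (by strong continuity of $(P_t)_{t\geq 0}$), it is moreover the unique such extension.

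The main point requiring care is keeping three distinct Hilbert-space dualities straight simultaneously: the Riesz isomorphism $F_{1,2}\leftrightarrow F^*_{1,2}$ (which is $1-L$ by construction), the Gelfand identification in the triple $(L^2(\mu),F^*_{1,2},(L^2(\mu))^*)$, and the $L^2(\mu)$-self-duality. The spectral identity collapsing the cross-term $\langle(1-L)^{1/2}u,(1-L)^{-1/2}v\rangle_2=\langle u,v\rangle_2$ is exactly what reconciles them.
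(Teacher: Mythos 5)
Your proof is correct: the paper itself gives no argument for this lemma (it is quoted from \cite{RWX}), and your computation --- pass through the Gelfand compatibility \eref{eqn6} to the $F^*_{1,2}$ inner product, unwind it via $(1-L)^{-1/2}$, and use self-adjointness of the fractional powers to collapse $\langle(1-L)^{1/2}u,(1-L)^{-1/2}v\rangle_2$ to $\langle u,v\rangle_2$, then extend by density of $F_{1,2}=D((1-L)^{1/2})$ in $L^2(\mu)$ --- is exactly the standard argument used there. No gaps.
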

We also need the following result (see \cite[Theorem 23.12]{K}).
\begin{proposition}\label{proposition3}\textbf{(Burkholder-Davis-Gundy)}
There exists some constant $c_p\in(0,\infty)$, $p\geq1$, such that for any real-valued local martingale $M=\{M_t, t\in[0,T]\}$ with $M_0=0$,
\begin{eqnarray*}
c_p^{-1}\Bbb{E}\big([M]_T^{\frac{p}{2}}\big)\leq\Bbb{E}\big[\sup_{t\in[0,T]}|M_t|^p\big]\leq c_p\Bbb{E}\big([M]_T^{\frac{p}{2}}\big).
\end{eqnarray*}
Here $[M]=\{[M]_t,t\in[0,T]\}$ is the quadratic variation process of $M$.
\end{proposition}

Denote $\mathcal{H}$ be a Banach space. Throughout the paper, let $L^2([0,T]\times\Omega;\mathcal{H})$ denote
the space of all $\mathcal{H}$-valued square-integrable functions on
$[0,T]\times\Omega$, $L^\infty([0,T],\mathcal{H})$ the space of all $\mathcal{H}$-valued measurable functions on $[0,T]$, $C([0,T];\mathcal{H})$ the space of all continuous $\mathcal{H}$-valued functions on $[0,T]$, $D([0,T];\mathcal{H})$ the space of all c\`{a}dl\`{a}g $\mathcal{H}$-valued functions on $[0,T]$. For two Hilbert spaces $H_1$ and $H_2$, the space of Hilbert-Schmidt operators from $H_1$ to $H_2$ is denoted by $L_2(H_1,H_2)$. For simplicity, the positive constants $c$, $C$, $C_i$, $i=1,2,3$, used in this paper may change from line to line.

\section{Hypothesis and main result}
\setcounter{equation}{0}
 \setcounter{definition}{0}

In this paper, we study \eqref{eq:1} with the following hypotheses:

\medskip
\noindent \textbf{(H1)} $\Psi(\cdot):\Bbb{R}\rightarrow \Bbb{R}$ is a monotonically nondecreasing Lipschitz function with $\Psi(0)=0$.

\vspace{1mm}
\medskip
\noindent \textbf{(H2)} Suppose there exists a positive constant $C_1$ such that
$$\int_{Z}\|f(t,u,z)\|_{F^*_{1,2}}^2\nu(dz)\leq C_1(1+\|u\|^2_{F^*_{1,2}}), \ \forall u\in F^*_{1,2}.$$

\vspace{1mm}
\medskip
\noindent \textbf{(H3)} Suppose there exists a positive constant $C_2$ such that
$$\int_{Z}\|f(t,u_1,z)-f(t,u_2,z)\|_{F^*_{1,2}}^2\nu(dz)\leq C_2\|u_1-u_2\|^2_{F^*_{1,2}}, \ \forall u_1, u_2\in F^*_{1,2}.$$

\vspace{1mm}
\medskip
\noindent \textbf{(H4)} Suppose there exists a positive constant $C_3$ such that
$$\int_{Z}|f(t,u,z)|_{2}^2\nu(dz)\leq C_3(1+|u|^2_{2}), \ \forall u\in L^2(\mu).$$

\begin{remark}
An important physical example of $\Psi$ is the Stefan problem. Consider
\begin{eqnarray}
\Psi(r)=
\left\{ \begin{aligned}
&ar,\ \text{for}\ r<0,\\
&0,\ \text{~for~} 0\leq r\leq \rho,\\
&b(r-\rho),\ \text{~for~} r>\rho,\\
\end{aligned} \right.
\end{eqnarray}
where $a,b,\rho>0$. Notice that $\Psi$ fulfills \textbf{(H1)} with Lipschitz constant $Lip\Psi=\max\{a,b\}$. We omit the details of the Stefan problem, but refer to (\cite[Section 1.1.1]{BDR}) and references therein for related backgrounds.

\end{remark}

\begin{definition}
An $F^*_{1,2}$-valued c\`{a}dl\`{a}g $\mathcal{F}_t$-adapted process $\{X(t)\}_{t\in[0,T]}$ is called a strong solution to \eqref{eq:1}, if
we have
\begin{eqnarray}\label{eqn1}
{X}\in L^2([0,T]\times \Omega; L^2(\mu))\cap L^2(\Omega;L^\infty([0,T];F^*_{1,2}));
\end{eqnarray}

\begin{eqnarray}\label{eqn2}
\int_0^\cdot \Psi({X}(s))ds\in C([0,T];F_{1,2}),\ \Bbb{P}\text{-a.s}.;
\end{eqnarray}

\begin{eqnarray}\label{eqn3}
X(t)=x+L\int_0^t\Psi({X}(s))ds+\int_0^t\int_{Z}f(s,{X}(s-),z)\widetilde{N}(ds,dz),\ \forall t\in[0,T],\ \Bbb{P}\text{-a.s}..
\end{eqnarray}
\end{definition}

Now we can present the main result of this paper.
\begin{theorem}\label{Th1}
Suppose that \textbf{(H1)}-\textbf{(H4)} hold.
Then, for each $x\in L^2(\mu)$, there is a unique strong solution $X$ to
\eqref{eq:1} and exist $C_1, C_2\in(0,\infty)$ satisfying
\begin{eqnarray}\label{eqn4}
\Bbb{E}\Big[\sup_{t\in[0,T]}|X(t)|_2^2\Big]\leq e^{C_1T}(2|x|_2^2+C_2).
\end{eqnarray}
Suppose that \textbf{(H1)}-\textbf{(H3)} and the following hold,
\begin{equation}\label{eq:2}
\Psi(r)r\geq c r^2,\ \forall r\in \mathbb{R},
\end{equation}
where $c\in(0, \infty)$. Then, for all $x\in F_{1,2}^*$, there is a unique strong solution $X$ to
\eqref{eq:1}.
\end{theorem}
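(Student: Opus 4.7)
The plan is to build the strong solution by a double approximation scheme on $L$ and $\Psi$, following the outline announced for Sections~4–5. I would first regularise $L$ by its Yosida approximation $L_\varepsilon:=L(I-\varepsilon L)^{-1}$, bounded on $L^2(\mu)$ and strongly convergent to $L$ on $D(L)$, and replace $\Psi$ by $\Psi_\lambda(r):=\Psi(r)+\lambda r$ to upgrade monotonicity to strict coercivity. In the Gelfand triple $L^2(\mu)\subset F^*_{1,2}\subset (L^2(\mu))^*$ fixed in Section~2, the regularised equation
\begin{equation*}
dX^\varepsilon_\lambda(t)=L_\varepsilon\Psi_\lambda(X^\varepsilon_\lambda(t))\,dt+\int_Z f(t,X^\varepsilon_\lambda(t-),z)\,\widetilde{N}(dt,dz),\quad X^\varepsilon_\lambda(0)=x,
\end{equation*}
fits into the local-monotonicity / coercivity / hemicontinuity framework for L\'evy-driven variational SPDEs (\cite{BLZ}), yielding a unique solution $X^\varepsilon_\lambda\in L^2([0,T]\times\Omega;L^2(\mu))\cap D([0,T];F^*_{1,2})$.

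Second, I would prove a priori estimates uniform in $(\varepsilon,\lambda)$. It\^o's formula for $\|X^\varepsilon_\lambda(t)\|_{F^*_{1,2}}^2$, combined with the dissipativity of $L_\varepsilon$, identity \eref{eqn7}, (H2) and a BDG bound on the compensated Poisson martingale, gives via Gronwall
\begin{equation*}
\EE\Big[\sup_{t\leq T}\|X^\varepsilon_\lambda(t)\|_{F^*_{1,2}}^2\Big]+\EE\int_0^T\langle \Psi_\lambda(X^\varepsilon_\lambda(s)),X^\varepsilon_\lambda(s)\rangle_2\,ds\leq C(1+\|x\|_{F^*_{1,2}}^2).
\end{equation*}
Under \eref{eq:2}, the second term controls $\EE\int_0^T|X^\varepsilon_\lambda(s)|_2^2\,ds$ uniformly, which handles the $x\in F^*_{1,2}$ statement. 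For $x\in L^2(\mu)$, I would instead apply It\^o directly to $|X^\varepsilon_\lambda(t)|_2^2$ via the isometry $1-L:L^2(\mu)\to (L^2(\mu))^*$: the dissipativity $_{(L^2)^*}\langle L_\varepsilon u,u\rangle_{L^2(\mu)}\leq 0$, monotonicity of $\Psi_\lambda$ and (H2) close a Gronwall estimate yielding \eref{eqn4} \emph{without} using \eref{eq:2}.

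Third, I would pass to the limit, first $\lambda\downarrow 0$, then $\varepsilon\downarrow 0$. The uniform bounds supply weakly convergent subsequences $X^\varepsilon_\lambda\rightharpoonup X$ in $L^2([0,T]\times\Omega;L^2(\mu))$ and $\Psi_\lambda(X^\varepsilon_\lambda)\rightharpoonup \eta$ in the same space, while the jump integrals converge by (H2)–(H3) and martingale compactness. The central obstacle, and the heart of the proof, is to identify $\eta=\Psi(X)$: since $\Psi$ is only monotone Lipschitz, no strong compactness is directly at hand. I would run a Minty-type trick inside the Gelfand triple by writing It\^o for $\|X^\varepsilon_\lambda\|_{F^*_{1,2}}^2$, rearranging, and using
\begin{equation*}
\int_0^T\langle \Psi_\lambda(X^\varepsilon_\lambda)-\Psi_\lambda(v),X^\varepsilon_\lambda-v\rangle_2\,ds\geq 0,
\end{equation*}
which passes to the weak limit; choosing $v=X\pm\delta w$ and letting $\delta\downarrow 0$ for arbitrary $w\in L^2([0,T]\times\Omega;L^2(\mu))$ forces $\eta=\Psi(X)$.

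Finally, uniqueness follows by applying It\^o's formula to $\|X_1(t)-X_2(t)\|_{F^*_{1,2,\varepsilon}}^2$ for two strong solutions: the drift contributes the non-positive term $-2\int_0^t\langle\Psi(X_1)-\Psi(X_2),X_1-X_2\rangle_2\,ds$ by monotonicity of $\Psi$, while the jump part is estimated by (H3) and BDG, so Gronwall closes the argument. As the authors flag, the unboundedness of $L$ makes the bare $F^*_{1,2}$-norm awkward in this calculation; the $\varepsilon$-shifted norm \eref{eqnarray5}, together with the equivalence \eref{eqn22}, resolves this issue and allows one to send $\varepsilon\downarrow 0$ at the end to recover uniqueness in $F^*_{1,2}$.
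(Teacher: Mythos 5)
Your overall architecture coincides with the paper's: a double approximation (one parameter regularising $L$, one parameter $\lambda$ making $\Psi+\lambda I$ coercive), solvability of the regularised equations via the locally monotone variational framework of \cite{BLZ} in the Gelfand triple $L^2(\mu)\subset F^*_{1,2}\subset(L^2(\mu))^*$, uniform a priori bounds, a Minty-type identification of the weak limit of the nonlinearity, and monotonicity-plus-Gronwall uniqueness in the ($\varepsilon$-shifted) dual norm. The genuine gap is your choice of regularisation of $L$. The paper replaces $L$ by $L-\varepsilon$ (equation \eref{eq:3}), keeping the unbounded operator intact, whereas you take the Yosida approximation $L_\varepsilon=L(I-\varepsilon L)^{-1}$. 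Every load-bearing estimate in the paper rests on the identity $_{(L^2(\mu))^*}\langle(1-L)u,v\rangle_{L^2(\mu)}=\langle u,v\rangle_2$ of Lemma 2.1, which converts the pairing of the drift $(L-\varepsilon)\Psi(u)$ against $u$ into $-\langle\Psi(u),u\rangle_2+(1-\varepsilon)\langle\Psi(u),u\rangle_{F^*_{1,2}}$; the first term is exactly where the pointwise monotone--Lipschitz structure of $\Psi$ enters (via $\langle\Psi(u)-\Psi(v),u-v\rangle_2\geq\widetilde{\alpha}|\Psi(u)-\Psi(v)|_2^2$), and it produces the coercivity term $\mathbb{E}\int_0^t|\Psi(X^\varepsilon(s))|_2^2\,ds$ on the left of \eref{eqn43}, the local monotonicity \eref{eqn8}, and the Cauchy estimates \eref{eqn25} and \eref{eqn49}. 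With $L_\varepsilon$ the corresponding pairing is $\langle(1-L)^{-1}L(1-\varepsilon L)^{-1}\Psi(u),u\rangle_2$, and the resolvent $(1-\varepsilon L)^{-1}$ now sits \emph{between} $\Psi(u)$ and $u$: the quantity $\langle(1-\varepsilon L)^{-1}\Psi(u),u\rangle_2$ has no sign in general (already on a two-point space with $u=(1,-M)$, $M>1$, $\Psi(r)=r^+$ and $(1-\varepsilon L)^{-1}$ close to the averaging operator, it is negative), and it cannot be bounded below by $c\,\langle\Psi(u),u\rangle_2$ minus a controlled error without regularity of $\Psi(u)$ that is not available uniformly in $\varepsilon$. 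Consequently your announced energy inequality, with $\mathbb{E}\int_0^T\langle\Psi_\lambda(X^\varepsilon_\lambda),X^\varepsilon_\lambda\rangle_2\,ds$ appearing on the left-hand side, does not follow from ``the dissipativity of $L_\varepsilon$''; and with it fail the uniform bound on $\mathbb{E}\int_0^T|\Psi(X^\varepsilon)|_2^2\,ds$ that drives the convergence in $\varepsilon$, the $x\in F^*_{1,2}$ case under \eref{eq:2}, and the local-monotonicity hypothesis needed to invoke \cite{BLZ} in this triple.

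Two smaller remarks. First, part of your plan does survive the Yosida choice: in $L^2(\mu)$ one has $\langle L_\varepsilon\Psi(u),u\rangle_2=-\mathcal{E}^{(1/\varepsilon)}(\Psi(u),u)\leq0$ by the kernel representation of the approximating form together with the monotonicity of $\Psi$ and $\Psi(0)=0$, so your $L^2$-estimate is sound; but an $L^2$ bound alone does not give the $F^*_{1,2}$-contraction estimates needed to pass to the limit. Second, even in the paper's setting one cannot apply It\^{o}'s formula ``directly'' to $|X^\varepsilon_\lambda(t)|_2^2$, since $X^\varepsilon_\lambda$ is a semimartingale only in $F^*_{1,2}$: Claim \ref{claim1} first mollifies by $(\alpha-L)^{-1/2}$, applies It\^{o} in the triple $F_{1,2}\subset L^2(\mu)\subset F^*_{1,2}$, and then lets $\alpha\to\infty$ by monotone convergence to obtain \eref{eqn7} and hence \eref{eqn4}. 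The repair is simply to adopt the paper's regularisation $(\varepsilon-L)\Psi$, which is designed precisely so that $L$ itself survives into the duality bracket; the rest of your outline then goes through essentially as you describe.
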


\begin{remark}
Suppose $W$ is a cylindrical Wiener process on $L^2(\mu)$, $B: [0, T]\times F^*_{1,2}\times \Omega\rightarrow L_2(L^2(\mu), F^*_{1,2})$ is progressively measurable. If we add a stochastic term of the type $B(t,X(t))dW(t)$ to the right hand side of \eqref{eq:1}, and assume $B(\cdot,u,\cdot)$ satisfies Lipschitz and linear growth conditions w.r.t. $u\in F^*_{1,2}$. Then, Theorem \ref{Th1} continues to hold. For simplicity, in this paper we concentrate on the jump part of the noise.
\end{remark}
\vspace{2mm}
The proof of Theorem \ref{Th1} is given in Section 5.

\section{Approximations}

To prove Theorem \ref{Th1}, we first consider the following approximating equations for \eqref{eq:1}:
\begin{equation} \label{eq:3}
\left\{ \begin{aligned}
&dX^\varepsilon(t)+(\varepsilon-L)\Psi(X^\varepsilon(t))dt=\int_{Z}f(t,X^\varepsilon(t-),z)\widetilde{N}(dt,dz),\ \text{in}\ [0,T]\times E,\\
&X^\varepsilon(0)=x \text{~on~} E,
\end{aligned} \right.
\end{equation}
where $\varepsilon\in(0,1)$. We have the following proposition for \eqref{eq:3}.

\begin{proposition}\label{Th2}
Suppose that \textbf{(H1)}-\textbf{(H4)} hold. Then, for each $x\in L^2(\mu)$, there is a unique $\mathcal{F}_t$-adapted strong solution to \eqref{eq:3}, denoted by $X^\varepsilon$, i.e., it has the following properties,
\begin{eqnarray}\label{eqn5}
X^\varepsilon\in L^2([0,T]\times \Omega; L^2(\mu))\cap L^2(\Omega;L^\infty([0,T];F^*_{1,2}));
\end{eqnarray}
\begin{eqnarray}\label{eqn5.1}
X^\varepsilon\in D([0,T];F^*_{1,2}),\ \Bbb{P}\text{-}a.s.;
\end{eqnarray}
\begin{eqnarray}\label{eqn6}
X^\varepsilon(t)+\!(\varepsilon-L)\!\int_0^t\!\Psi({X^\varepsilon}(s))ds\!=\!x\!+\!\int_0^t\!\int_{Z}\!f(s,{X^\varepsilon}(s-),z)\widetilde{N}(ds,dz),\ \forall t\in[0,T],\ \Bbb{P}\text{-}a.s..
\end{eqnarray}
Furthermore, there exist $C_1, C_2\in(0,\infty)$ such that for all $\varepsilon\in(0,1)$,
\begin{eqnarray}\label{eqn7}
\Bbb{E}\Big[\sup_{t\in[0,T]}|X^\varepsilon(t)|_2^2\Big]\leq e^{C_1T}(2|x|_2^2+C_2).
\end{eqnarray}
Suppose that \textbf{(H1)}-\textbf{(H3)} and \eqref{eq:2} are satisfied. Then, for all $x\in F^*_{1,2}$, there is a unique strong solution to \eqref{eq:3} satisfying \eqref{eqn5}, \eqref{eqn5.1} and \eqref{eqn6}.
\end{proposition}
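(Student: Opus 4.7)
The plan is to handle both cases of Proposition \ref{Th2} through a variational framework on the Gelfand triple $(V, H, V^*) = (L^2(\mu), F^*_{1,2}, (L^2(\mu))^*)$ with $H$ endowed with the equivalent norm $\|\cdot\|_{F^*_{1,2,\varepsilon}}$, introducing an additional $\lambda$-approximation of $\Psi$ when \eqref{eq:2} is not assumed. The choice of norm on $H$ is dictated by the identity $\langle (\varepsilon-L)^{-1}u,\, -(\varepsilon-L)\Psi(u)\rangle_2 = -\int u\,\Psi(u)\,d\mu$, which makes the drift operator $A^\varepsilon(u) := -(\varepsilon - L)\Psi(u)$ monotone in $H$ (by the nondecrease of $\Psi$) and, under \eqref{eq:2}, $V$-coercive since $\langle A^\varepsilon(u), u\rangle \le -c\,|u|_2^2$. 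Combined with \textbf{(H2)}--\textbf{(H3)} on $f$, these are the hypotheses of the local monotonicity existence theorem for SPDEs with jumps (cf.\ \cite{BLZ}), which I would invoke to obtain $X^\varepsilon$ satisfying \eqref{eqn5}--\eqref{eqn6} for any $x \in F^*_{1,2}$; this settles the second assertion of the proposition.

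For $x \in L^2(\mu)$ without \eqref{eq:2}, I would introduce $\Psi_\lambda(r) := \Psi(r) + \lambda r$ for $\lambda \in (0,1)$, which satisfies \textbf{(H1)} and \eqref{eq:2} with $c = \lambda$; the above then yields a unique approximating solution $X^\varepsilon_\lambda$. The key uniform estimate \eqref{eqn7} comes from applying Itô's formula to $|X^\varepsilon_\lambda(t)|_2^2$: the drift contribution equals $-2\varepsilon\int \Psi_\lambda(X^\varepsilon_\lambda)\, X^\varepsilon_\lambda\, d\mu - 2\mathcal{E}(X^\varepsilon_\lambda, \Psi_\lambda(X^\varepsilon_\lambda))$, and both summands are non-positive because $\Psi_\lambda$ is Lipschitz nondecreasing (so it operates on the Dirichlet form associated with the sub-Markovian generator $L$). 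Discarding these terms, the Burkholder--Davis--Gundy inequality for the compensated Poisson martingale together with \textbf{(H2)} and Gronwall yields $\mathbb{E}[\sup_{t\in[0,T]}|X^\varepsilon_\lambda(t)|_2^2] \le e^{C_1 T}(2|x|_2^2 + C_2)$ with constants independent of both $\lambda$ and $\varepsilon$.

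The passage $\lambda \to 0$ is the core step. I would apply Itô's formula to $\|X^\varepsilon_{\lambda_1}(t) - X^\varepsilon_{\lambda_2}(t)\|_{F^*_{1,2,\varepsilon}}^2$: the drift splits as $-2\int(\Psi(X^\varepsilon_{\lambda_1}) - \Psi(X^\varepsilon_{\lambda_2}))(X^\varepsilon_{\lambda_1} - X^\varepsilon_{\lambda_2})\,d\mu - 2\int(\lambda_1 X^\varepsilon_{\lambda_1} - \lambda_2 X^\varepsilon_{\lambda_2})(X^\varepsilon_{\lambda_1} - X^\varepsilon_{\lambda_2})\,d\mu$, of which the first summand is $\le 0$ by monotonicity of $\Psi$ and the second is bounded in expectation by $C(\lambda_1 + \lambda_2)$ via Cauchy--Schwarz and the uniform $L^2$-bound above. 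The jump term is controlled by \textbf{(H3)}, and Gronwall gives Cauchy convergence of $\{X^\varepsilon_\lambda\}$ in $L^2(\Omega; D([0,T]; F^*_{1,2}))$. The limit $X^\varepsilon$ lies in $L^2(\Omega; L^\infty([0,T]; L^2(\mu))) \cap L^2([0,T]\times\Omega; L^2(\mu))$ by Fatou's lemma and weak-$*$ compactness, and is identified as a strong solution of \eqref{eq:3} via the continuity of $(\varepsilon - L)$ on the relevant spaces and the L\'evy-noise Itô isometry. Uniqueness in both cases is analogous: for two solutions $X^\varepsilon, Y^\varepsilon$ with the same initial datum, Itô applied to $\|X^\varepsilon - Y^\varepsilon\|_{F^*_{1,2,\varepsilon}}^2$ has nonpositive drift by monotonicity, and \textbf{(H3)} closes the Gronwall loop. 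The main technical subtleties I expect are the rigorous justification of Itô's formula for $|\cdot|_2^2$ in the variational setting with jumps and the bookkeeping of the cross-$\lambda$ terms, both of which require careful use of the Gelfand-triple structure and of the equivalent norms $\|\cdot\|_{F^*_{1,2,\varepsilon}}$.
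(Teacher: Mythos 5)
Your overall strategy coincides with the paper's: the Gelfand triple $(L^2(\mu), F^*_{1,2}, (L^2(\mu))^*)$, the local-monotonicity existence theorem of \cite{BLZ} for the coercive case, the regularization $\Psi+\lambda I$, a uniform $L^2(\mu)$-estimate, and a Cauchy argument in the norms $\|\cdot\|_{F^*_{1,2,\varepsilon}}$. But there are two genuine gaps, and they sit at exactly the two points that carry the technical weight. First, the uniform estimate \eref{eqn7}: you propose to apply It\^{o}'s formula directly to $|X^\varepsilon_\lambda(t)|_2^2$ and read off the drift as $-2\varepsilon\int\Psi_\lambda(X)X\,d\mu-2\mathcal{E}(X,\Psi_\lambda(X))$. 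However, $X^\varepsilon_\lambda$ is a semimartingale only in $H=F^*_{1,2}$; it lies in $V=L^2(\mu)$ merely for $dt\otimes\mathbb{P}$-a.e.\ $(t,\omega)$, so the variational It\^{o} formula yields $\|\cdot\|^2_{F^*_{1,2}}$, not $|\cdot|_2^2$, and the expression $\mathcal{E}(X,\Psi_\lambda(X))$ is not even defined since $X(t)$ need not belong to the form domain. Flagging this as a ``subtlety to be justified'' does not close it. The paper's device is to first apply the bounded operator $(\alpha-L)^{-\frac{1}{2}}$ to the equation, apply It\^{o}'s formula to $\big|(\alpha-L)^{-\frac{1}{2}}X^\varepsilon_\lambda\big|_2^2$ in the triple $F_{1,2}\subset L^2(\mu)\subset F^*_{1,2}$ (where all terms are legitimate), verify that the regularized drift terms are non-positive resp.\ dominated by $2\lambda\varepsilon\|\cdot\|^2_{F_{1,2}}$, and only then let $\alpha\to\infty$ by monotone convergence. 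Some regularization of this kind is indispensable.

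Second, the identification of the limit equation. After extracting the weak $L^2$-limit $Y$ of $\Psi(X^\varepsilon_\lambda)+\lambda X^\varepsilon_\lambda$, you assert that $X^\varepsilon$ is identified as a solution ``via the continuity of $(\varepsilon-L)$''. Continuity of the linear operator is beside the point: the difficulty is to show $Y=\Psi(X^\varepsilon)$, and since $X^\varepsilon_\lambda\to X^\varepsilon$ only in $F^*_{1,2}$ (which gives no pointwise or $L^2(\mu)$ convergence) while $\Psi$ is nonlinear, one cannot pass to the limit inside $\Psi$. The paper devotes Claim \ref{claim3} to this, running a Minty-type monotonicity argument: It\^{o}'s formula for $e^{-Kt}\|X^\varepsilon_\lambda(t)\|^2_{F^*_{1,2,\varepsilon}}$ tested against arbitrary processes $\phi$, weak lower semicontinuity, and finally the choice $\phi=X^\varepsilon-\epsilon\tilde{\phi}u$. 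Without this step (or an equivalent argument, e.g.\ exploiting that the Cauchy estimate also yields strong $L^2$-convergence of $\Psi(X^\varepsilon_\lambda)$ and then identifying that limit), the existence proof for $x\in L^2(\mu)$ is incomplete. The remaining ingredients of your sketch --- the Cauchy estimate in $\|\cdot\|_{F^*_{1,2,\varepsilon}}$ with the cross-$\lambda$ term bounded through the uniform $L^2$-bound, and the uniqueness argument via monotonicity, \textbf{(H3)} and Gronwall --- are correct and match the paper.
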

\begin{proof}
We proceed in two steps. First, we consider the case with initial date $x\in F^*_{1,2}$ and that \eqref{eq:2} is satisfied. Then, we will prove the existence and uniqueness result with $x\in L^2(\mu)$ and without assumption \eqref{eq:2}, by replacing $\Psi$ with $\Psi+\lambda I$, $\lambda\in(0,1)$ and letting $\lambda\rightarrow0$.

\textbf{Step 1}: Assume $x\in F^*_{1,2}$ and that \eqref{eq:2} is satisfied. Set $V:= L^2(\mu)$, $H:=F^*_{1,2}$, $Au:=(L-\varepsilon)\Psi(u)$ for $u\in V$. Under the Gelfand triple $L^2(\mu)\subset F^*_{1,2}\equiv F_{1,2}\subset (L^2(\mu))^*$, we can check the four conditions in \cite[Theorem 1.2]{BLZ} to get the existence and uniqueness of solutions to \eqref{eq:3}.

To make it more explicitly, the hemicontinuity follows directly from \cite[P213, Step 1, (i)]{RWX}, i.e., for all $u, v, w\in V=L^2(\mu)$, for $\iota\in\Bbb{R}$, $|\iota|\leq1$,
\begin{eqnarray*}
\lim_{\iota\rightarrow0}\ _{V^*}\!\langle A(u+\iota v),
w\rangle_V-_{V^*}\!\!\langle Au, w\rangle_V=0.
\end{eqnarray*}

From \cite[Step1, (ii)]{RWX} and \textbf{(H3)}, we can see that the local monotonicity holds, i.e., for all $u_1, u_2\in V(:=L^2(\mu))$,
\begin{eqnarray}\label{eqn8}
&&2_{V^*}\langle Au_1-Au_2, u_1-u_2\rangle_V+\int_{Z}\|f(t,u_1,z)-f(t,u_2,z)\|^2_{F^*_{1,2}}\nu(dz)\nonumber\\
\leq\!\!\!\!\!\!\!\!&&\Big(\frac{2(1-\varepsilon)^2}{\widetilde{\alpha}}+C_2\Big)\|u_1-u_2\|^2_{F^*_{1,2}}.
\end{eqnarray}
Here $\widetilde{\alpha}:=(k+1)^{-1}$, $k:=Lip\Psi$, which is the Lipschitz constant of $\Psi$.
\vspace{2mm}
As a short remark, the difference of the estimates between \cite[(3.9)]{RWX} and \eqref{eqn8} only lies in the second term in both left-hand sides. Since both terms satisfy Lipschitz condition, the local monotonicity is obvious.

From \cite[Step1, (iii)]{RWX} and \textbf{(H2)}, we can see easily that the coercivity holds, i.e., for all $u\in V(:=L^2(\mu))$,
$$2_{V^*}\langle Au, u\rangle_V\leq \big[-2c+2\theta^2k^2(1-\varepsilon)\big]\cdot|u|_2^2+\Big[\frac{2(1-\varepsilon)}{\theta^2}+C_1\Big]\cdot\|u\|^2_{F^*_{1,2}}+C_1.$$
Here $\theta$ is a positive constant and small enough such that $-2c+2\theta^2k^2(1-\varepsilon)$ is negative.

\cite[Step1, (iv)]{RWX} implies the growth condition, i.e., for all $u\in V(:=L^2(\mu))$,
$$|Au|_{V^*}\leq 2k|u|_2.$$

By applying \cite[Theorem 1.2]{BLZ}, we know that there exists a unique strong solution to \eqref{eq:3}, denoted by $X^\varepsilon$.

\textbf{Step 2}: If $\Psi$ doesn't satisfy \eqref{eq:2}, the hemicontinuity, local monotonicity and growth conditions still hold, but the coercivity condition not in general. In this case, we will approximate $\Psi$ by $\Psi+\lambda I$, $\lambda\in(0,1)$.

Consider the following approximating equations for \eqref{eq:3}:
\begin{equation} \label{eq:4}
\left\{ \begin{aligned}
&dX^{\varepsilon}_{\lambda}(t)+(\varepsilon-L)\big(\Psi(X^\varepsilon_\lambda(t)+\lambda X^\varepsilon_\lambda(t)\big)dt=\int_{Z}f(t,X^\varepsilon_\lambda(t-),z)\widetilde{N}(dt,dz),\ \text{in}\ [0,T]\times E,\\
&X^\varepsilon_\lambda(0)=x\in F^*_{1,2} \text{~on~} E.
\end{aligned} \right.
\end{equation}
By using the similar argument as in Step 1, it is easy to prove that there exists a unique strong solution to \eqref{eq:4} which satisfies, $X^\varepsilon_\lambda\in D([0,T];F^*_{1,2})$, $\Bbb{P}$-a.s., $X^\varepsilon_\lambda\in L^2([0,T]\times \Omega; L^2(\mu))\cap L^2(\Omega;L^\infty([0,T];F^*_{1,2}))$, and
\begin{eqnarray*}
X^\varepsilon_\lambda(t)+\!\!(\varepsilon-L)\int_0^t\Psi(X^\varepsilon_\lambda(s))ds=x+\!\!\int_0^t\int_{Z}f(s,X^\varepsilon_\lambda(s-),z)\widetilde{N}(ds,dz),\ \forall t\in[0,T],\ \Bbb{P}\text{-a.s}.,
\end{eqnarray*}
and
\begin{eqnarray}\label{eqn9}
\Bbb{E}\Big[\sup_{t\in[0,T]}\|X^\varepsilon_\lambda(t)\|^2_{F^*_{1,2}}\Big]<\infty.
\end{eqnarray}

In the following, we want to prove that the sequence $\{X^\varepsilon_\lambda\}_{\lambda\in(0,1)}$ converges to the solution of \eqref{eq:3} as $\lambda\rightarrow0$. From now on, we assume that the initial date $x\in L^2(\mu)$. This proof is divided into three parts, which are given as three claims.

\begin{claim}\label{claim1}
\begin{eqnarray*}
\Bbb{E}\Big[\sup_{s\in[0,T]}|X^\varepsilon_\lambda(s)|_2^2\Big]+4\lambda\Bbb{E}\int_0^T\|X^\varepsilon_\lambda(s)\|^2_{F_{1,2}}ds\leq e^{C_1T}(2|x|_2^2+C_2),\ \forall \varepsilon, \lambda\in(0,1).
\end{eqnarray*}
\end{claim}
\begin{proof}
Rewrite \eqref{eq:4} as follows, for all $t\in[0,T]$,
\begin{eqnarray}\label{eqn10}
X^\varepsilon_\lambda(t)=x+\!\!\int_0^t(L-\varepsilon)\big(\Psi(X^\varepsilon_\lambda(s))+\lambda X^\varepsilon_\lambda(s)\big)ds+\!\!\int_0^t\int_{Z}f(s,X^\varepsilon_\lambda(s-),z)\widetilde{N}(ds,dz).
\end{eqnarray}
Let $\varepsilon<\alpha<\infty$, applying the operator $(\alpha-L)^{-\frac{1}{2}}:F^*_{1,2}\rightarrow L^2(\mu)$ to both sides of \eqref{eqn10}, we obtain
\begin{eqnarray}\label{eqn11}
&&\!\!\!\!\!\!\!\!(\alpha-L)^{-\frac{1}{2}}X^\varepsilon_\lambda(t)\nonumber\\
=&&\!\!\!\!\!\!\!\!(\alpha-L)^{-\frac{1}{2}}x+\int_0^t(L-\varepsilon)(\alpha-L)^{-\frac{1}{2}}\big(\Psi(X^\varepsilon_\lambda(s))+\lambda X^\varepsilon_\lambda(s)\big)ds\nonumber\\
&&\!\!\!\!\!\!\!\!+\int_0^t\int_{Z}(\alpha-L)^{-\frac{1}{2}}f(s,X^\varepsilon_\lambda(s-),z)\widetilde{N}(ds,dz).
\end{eqnarray}
Applying It\^{o}'s formula (cf, \cite[Remark A.2]{BHZ}) to $\big|(\alpha-L)^{-\frac{1}{2}}X^\varepsilon_\lambda(t)\big|_2^2$ with $H=L^2(\mu)$, $V=F_{1,2}$, we obtain that for $t\in[0,T]$,
\begin{eqnarray}\label{eqn12}
&&\!\!\!\!\!\!\!\!\big|(\alpha-L)^{-\frac{1}{2}}X^\varepsilon_\lambda(t)\big|_2^2\nonumber\\
=&&\!\!\!\!\!\!\!\!\big|(\alpha-L)^{-\frac{1}{2}}x\big|_2^2+2\int_0^t\ _{F^*_{1,2}}\big\langle(L-\varepsilon)(\alpha-L)^{-\frac{1}{2}}\Psi(X^\varepsilon_\lambda(s)),(\alpha-L)^{-\frac{1}{2}}X^\varepsilon_\lambda(s)\big\rangle_{F_{1,2}}ds\nonumber\\
&&\!\!\!\!\!\!\!\!+2\lambda\int_0^t\ _{F^*_{1,2}}\big\langle(L-\varepsilon)(\alpha-L)^{-\frac{1}{2}} X^\varepsilon_\lambda(s),(\alpha-L)^{-\frac{1}{2}}X^\varepsilon_\lambda(s)\big\rangle_{F_{1,2}}ds\nonumber\\
&&\!\!\!\!\!\!\!\!+2\int_0^t\int_{Z}\big\langle(\alpha-L)^{-\frac{1}{2}}X^\varepsilon_\lambda(s-),(\alpha-L)^{-\frac{1}{2}}f(s,X^\varepsilon_\lambda(s-),z)\big\rangle_2\widetilde{N}(ds,dz)\nonumber\\
&&\!\!\!\!\!\!\!\!+\int_0^t\int_{Z}\big|f(s,X^\varepsilon_\lambda(s-),z)\big|_2^2N(ds,dz).
\end{eqnarray}
From \cite[(3.19)]{RWX2022}, we know that the second term in the right-hand side of \eqref{eqn12} is non-positive. From \cite[(3.20)]{RWX2022}, we know that the third term in the right-hand side of \eqref{eqn12} can be dominated by
\begin{eqnarray*}
&&\!\!\!\!\!\!\!\!2\lambda\int_0^t\ _{F^*_{1,2}}\big\langle(L-\varepsilon)(\alpha-L)^{-\frac{1}{2}} X^\varepsilon_\lambda(s),(\alpha-L)^{-\frac{1}{2}}X^\varepsilon_\lambda(s)\big\rangle_{F_{1,2}}ds\nonumber\\
\leq&&\!\!\!\!\!\!\!\!-2\lambda\int_0^t\big\|(\alpha-L)^{-\frac{1}{2}}X^\varepsilon_\lambda(s)\big\|_{F_{1,2}}^2ds+2\int_0^t\big|(\alpha-L)^{-\frac{1}{2}}X^\varepsilon_\lambda(s)\big|^2_2ds.
\end{eqnarray*}
Multiplying both sides of \eqref{eqn12} by $\alpha$, using the above estimates and taking into account that $\sqrt{\alpha}(\alpha-L)^{-\frac{1}{2}}$ is a contraction on $L^2(\mu)$ (\cite[(3.22)]{RWX2022}), \eqref{eqn12} yields that for all $t\in[0,T]$,
\begin{eqnarray}\label{eqn13}
&&\big|\sqrt{\alpha}(\alpha-L)^{-\frac{1}{2}}X^\varepsilon_\lambda(t)\big|_2^2+2\lambda\int_0^t\big\|\sqrt{\alpha}(\alpha-L)^{-\frac{1}{2}}X^\varepsilon_\lambda(s)\big\|^2_{F_{1,2}}ds\nonumber\\
\le\!\!\!\!\!\!\!\!&&\big|\sqrt{\alpha}(\alpha-L)^{-\frac{1}{2}}x\big|_2^2+2\int_0^t\big|(\alpha-L)^{-\frac{1}{2}}X^\varepsilon_\lambda(s)\big|^2_2ds\nonumber\\
\!\!\!\!\!\!\!\!&&+2\int_0^t\int_{Z}\big\langle\sqrt{\alpha}(\alpha-L)^{-\frac{1}{2}}X^\varepsilon_\lambda(s-),\sqrt{\alpha}(\alpha-L)^{-\frac{1}{2}}f(s,X^\varepsilon_\lambda(s-),z)\big\rangle_2\widetilde{N}(ds,dz)\nonumber\\
\!\!\!\!\!\!\!\!&&+\int_0^t\int_{Z}\big|\sqrt{\alpha}(\alpha-L)^{-\frac{1}{2}}f(s,X^\varepsilon_\lambda(s-),z)\big|_2^2N(ds,dz)\nonumber\\
:=\!\!\!\!\!\!\!\!&&\big|\sqrt{\alpha}(\alpha-L)^{-\frac{1}{2}}x\big|_2^2+2\int_0^t|X^\varepsilon_\lambda(s)|^2_2ds+I_1(t)+I_2(t).
\end{eqnarray}
If $\varepsilon<\alpha<1$, by \eqref{eqn22} and \textbf{(H2)}, we have that for all $t\in[0,T]$,
\begin{eqnarray}\label{eqn14}
\!\!\!\!\!\!\!\!&&\Bbb{E}\int_0^t\int_{Z}\big|\sqrt{\alpha}(\alpha-L)^{-\frac{1}{2}}f(s,X^\varepsilon_\lambda(s),z)\big|_2^2\nu(dz)ds\nonumber\\
=\!\!\!\!\!\!\!\!&&\Bbb{E}\int_0^t\int_{Z}~ _{F_{1,2}}\langle \alpha(\alpha-L)^{-1}f(s,X^\varepsilon_\lambda(s),z),f(s,X^\varepsilon_\lambda(s),z)\rangle_{F_{1,2}^*}\nu(dz)ds\nonumber\\
=\!\!\!\!\!\!\!\!&&\Bbb{E}\int_0^t\int_{Z}\alpha\|f(s,X^\varepsilon_\lambda(s),z)\|^2_{F^*_{1,2,\alpha}}\nu(dz)ds\nonumber\\
\leq\!\!\!\!\!\!\!\!&&\Bbb{E}\int_0^t\int_{Z}\|f(s,X^\varepsilon_\lambda(s),z)\|^2_{F^*_{1,2}}\nu(dz)ds\nonumber\\
\leq\!\!\!\!\!\!\!\!&&\Bbb{E}\int_0^tC_1(1+\|X^\varepsilon_\lambda(s)\|^2_{F_{1,2}^*})ds.
\end{eqnarray}
Similarly, if $1\leq\alpha<\infty$, by \eqref{equivalent norm 2} and \textbf{(H2)}, we have that for all $t\in[0,T]$,
\begin{eqnarray}\label{eqn14.1}
\!\!\!\!\!\!\!\!&&\Bbb{E}\int_0^t\int_{Z}\big|\sqrt{\alpha}(\alpha-L)^{-\frac{1}{2}}f(s,X^\varepsilon_\lambda(s),z)\big|_2^2\nu(dz)ds\nonumber\\
\leq\!\!\!\!\!\!\!\!&&\Bbb{E}\int_0^tC_1\alpha(1+\|X^\varepsilon_\lambda(s)\|^2_{F_{1,2}^*})ds.
\end{eqnarray}
Therefore, from \eqref{eqn14}, \eqref{eqn14.1} and \eqref{eqn9}, we know that for $\varepsilon<\alpha<\infty$, $t\in[0,T]$,
$$\Bbb{E}\int_0^t\int_{Z}\big|\sqrt{\alpha}(\alpha-L)^{-\frac{1}{2}}f(s,X^\varepsilon_\lambda(s),z)\big|_2^2\nu(dz)ds<\infty,$$
then by the equality under \cite[page:62, Chapter II, (3.7)]{IW}, we have
\begin{eqnarray}\label{eqn14.2}
\!\!\!\!\!\!\!\!&&\Bbb{E}\big[I_2(t)\big]\nonumber\\
=\!\!\!\!\!\!\!\!&&\Bbb{E}\int_0^t\int_{Z}\big|\sqrt{\alpha}(\alpha-L)^{-\frac{1}{2}}f(s,X^\varepsilon_\lambda(s-),z)\big|_2^2N(ds,dz)\nonumber\\
=\!\!\!\!\!\!\!\!&&\Bbb{E}\int_0^t\int_{Z}\big|\sqrt{\alpha}(\alpha-L)^{-\frac{1}{2}}f(s,X^\varepsilon_\lambda(s),z)\big|_2^2\nu(dz)ds.
\end{eqnarray}
Since $\sqrt{\alpha}(\alpha-L)^{-\frac{1}{2}}$ is a contraction on $L^2(\mu)$, \eqref{eqn14.2} and \textbf{(H4)} implies,
\begin{eqnarray}\label{eqn14.3}
\!\!\!\!\!\!\!\!&&\Bbb{E}\big[I_2(t)\big]\nonumber\\
=\!\!\!\!\!\!\!\!&&\Bbb{E}\int_0^t\int_{Z}\big|\sqrt{\alpha}(\alpha-L)^{-\frac{1}{2}}f(s,X^\varepsilon_\lambda(s),z)\big|_2^2\nu(dz)ds\nonumber\\
\leq\!\!\!\!\!\!\!\!&&\Bbb{E}\int_0^t\int_{Z}\big|f(s,X^\varepsilon_\lambda(s),z)\big|^2_{2}\nu(dz)ds\nonumber\\
\leq\!\!\!\!\!\!\!\!&&C_3+C_3\Bbb{E}\int_0^t\big|X^\varepsilon_\lambda(s)\big|^2_{2}ds.
\end{eqnarray}
Note that $I_1:=\{I_1(t), t\in[0,T]\}$ is a real-valued c\`{a}dl\`{a}g local martingale with respect to $\Bbb{F}$, and \cite[Page:129, Theorem 8.23(iv)]{PZ} implies that the quadratic variation process of $I_1$ is
\begin{eqnarray*}
[I_1]_t=\int_0^t\int_{Z}\big|\sqrt{\alpha}(\alpha-L)^{-\frac{1}{2}}f(s,X^\varepsilon_\lambda(s-),z)\big|_2^2N(ds,dz),~t\in[0,T],
\end{eqnarray*}
then by using the Burkhold-Davis-Gundy (BDG) inequality (with $p$=1, see Proposition \ref{proposition3}), H\"{o}lder inequality, Young inequality and \eqref{eqn14.3}, we obtain that for all $t\in[0,T]$,
\begin{eqnarray}\label{eqn15}
\!\!\!\!\!\!\!\!&&\Bbb{E}\big[\sup_{s\in[0,t]}|I_1(s)|\big]\nonumber\\
\leq\!\!\!\!\!\!\!\!&&C\Bbb{E}\Bigg[\int_0^t\int_{Z}\Big|\big\langle\sqrt{\alpha}(\alpha-L)^{-\frac{1}{2}}X^\varepsilon_\lambda(s-),\sqrt{\alpha}(\alpha-L)^{-\frac{1}{2}}f(s,X^\varepsilon_\lambda(s-),z)\big\rangle_2\Big|^2N(ds,dz)\Bigg]^{\frac{1}{2}}\nonumber\\
\leq\!\!\!\!\!\!\!\!&&C\Bbb{E}\Bigg[\sup_{s\in[0,t]}\big|\sqrt{\alpha}(\alpha-L)^{-\frac{1}{2}}X^\varepsilon_\lambda(s)\big|_2^2\cdot\Big(\int_0^t\int_{Z}\big|\sqrt{\alpha}(\alpha-L)^{-\frac{1}{2}}f(s,X^\varepsilon_\lambda(s-),z)\big|_2^2N(ds,dz)\Big)\Bigg]^{\frac{1}{2}}\nonumber\\
\leq\!\!\!\!\!\!\!\!&&C\Bigg[\Bbb{E}\sup_{s\in[0,t]}\big|\sqrt{\alpha}(\alpha-L)^{-\frac{1}{2}}X^\varepsilon_\lambda(s)\big|_2^2\Bigg]^{\frac{1}{2}}\cdot\Bigg[\Bbb{E}\int_0^t\int_{Z}\big|\sqrt{\alpha}(\alpha-L)^{-\frac{1}{2}}f(s,X^\varepsilon_\lambda(s-),z)\big|_2^2N(ds,dz)\Bigg]^{\frac{1}{2}}\nonumber\\
=\!\!\!\!\!\!\!\!&&C\Bigg[\Bbb{E}\sup_{s\in[0,t]}\big|\sqrt{\alpha}(\alpha-L)^{-\frac{1}{2}}X^\varepsilon_\lambda(s)\big|_2^2\Bigg]^{\frac{1}{2}}\cdot\Bigg[\Bbb{E}\int_0^t\int_{Z}\big|\sqrt{\alpha}(\alpha-L)^{-\frac{1}{2}}f(s,X^\varepsilon_\lambda(s),z)\big|_2^2\nu(dz)ds\Bigg]^{\frac{1}{2}}\nonumber\\
\leq\!\!\!\!\!\!\!\!&&\frac{1}{2}\Bbb{E}\Big[\sup_{s\in[0,t]}\big|\sqrt{\alpha}(\alpha-L)^{-\frac{1}{2}}X^\varepsilon_\lambda(s)\big|_2^2\Big]+C\Bbb{E}\int_0^t\int_{Z}\big|\sqrt{\alpha}(\alpha-L)^{-\frac{1}{2}}f(s,X^\varepsilon_\lambda(s),z)\big|_2^2\nu(dz)ds\nonumber\\
\leq\!\!\!\!\!\!\!\!&&\frac{1}{2}\Bbb{E}\Big[\sup_{s\in[0,t]}\big|\sqrt{\alpha}(\alpha-L)^{-\frac{1}{2}}X^\varepsilon_\lambda(s)\big|_2^2\Big]+C_2\Bbb{E}\int_0^t\big|X^\varepsilon_\lambda(s)\big|^2_{2}ds+C_2.
\end{eqnarray}
By \eqref{eqn13}, \eqref{eqn14.3} and \eqref{eqn15}, we obtain that, for all $t\in[0,T]$,
\begin{eqnarray}\label{eqn16}
\!\!\!\!\!\!\!\!&&\Bbb{E}\Big[\sup_{s\in[0,t]}\big|\sqrt{\alpha}(\alpha-L)^{-\frac{1}{2}}X^\varepsilon_\lambda(s)\big|_2^2\Big]+2\lambda\int_0^t\big\|\sqrt{\alpha}(\alpha-L)^{-\frac{1}{2}}X^\varepsilon_\lambda(s)\big\|_{F_{1,2}}^2ds\nonumber\\
\leq\!\!\!\!\!\!\!\!&&\big|\sqrt{\alpha}(\alpha-L)^{-\frac{1}{2}}x\big|_2^2+C_1\Bbb{E}\int_0^t|X^\varepsilon_\lambda(s)|^2_2ds+C_2\nonumber\\
\!\!\!\!\!\!\!\!&&+\frac{1}{2}\Bbb{E}\Big[\sup_{s\in[0,t]}\big|\sqrt{\alpha}(\alpha-L)^{-\frac{1}{2}}X^\varepsilon_\lambda(s)\big|_2^2\Big].
\end{eqnarray}
Since $|\sqrt{\alpha}(\alpha-L)^{-\frac{1}{2}}\cdot|_2$ is equivalent to $\|\cdot\|_{F_{1,2}}^2$, we know the first summand of the left-hand side of \eqref{eqn16} is finite by \eqref{eqn9}, \eqref{eqn16} shows that
\begin{eqnarray}\label{eqn17}
\!\!\!\!\!\!\!\!&&\Bbb{E}\Big[\sup_{s\in[0,t]}\big|\sqrt{\alpha}(\alpha-L)^{-\frac{1}{2}}X^\varepsilon_\lambda(s)\big|_2^2\Big]+4\lambda\int_0^t\big\|\sqrt{\alpha}(\alpha-L)^{-\frac{1}{2}}X^\varepsilon_\lambda(s)\big\|_{F_{1,2}}^2ds\nonumber\\
\leq\!\!\!\!\!\!\!\!&&2\big|\sqrt{\alpha}(\alpha-L)^{-\frac{1}{2}}x\big|_2^2+C_1\Bbb{E}\int_0^t|X^\varepsilon_\lambda(s)|^2_2ds+C_2.
\end{eqnarray}
Letting $\alpha\rightarrow\infty$, similarly as to get \cite[(3.32)]{RWX2022}, we have that for $t\in[0,T]$,
\begin{eqnarray*}
\Bbb{E}\Big[\sup_{s\in[0,t]}|X^\varepsilon_\lambda(s)|_2^2\Big]+4\lambda\int_0^t\|X^\varepsilon_\lambda(s)\|_{F_{1,2}}^2ds\leq2|x|_2^2+C_1\Bbb{E}\int_0^t|X^\varepsilon_\lambda(s)|^2_2ds+C_2.
\end{eqnarray*}
The Gronwall inequality yields
\begin{eqnarray*}
\Bbb{E}\Big[\sup_{s\in[0,T]}|X^\varepsilon_\lambda(s)|_2^2\Big]+4\lambda\int_0^T\|X^\varepsilon_\lambda(s)\|_{F_{1,2}}^2ds\leq e^{C_1T}(2|x|_2^2+C_2).
\end{eqnarray*}\hspace{\fill}$\Box$
\end{proof}

\begin{claim}\label{claim2}
There exists an $F^*_{1,2}$-valued c\`{a}dl\`{a}g $\mathcal{F}_t$-adapted process $\{X^\varepsilon(t)\}_{t\in[0,T]}$ such that $X^\varepsilon\in L^2(\Omega;L^\infty([0,T];F^*_{1,2}))\cap L^2([0,T]\times\Omega;L^2(\mu))$, and
$$\lim_{\lambda\rightarrow0}\Bbb{E}\Big[\sup_{s\in[0,T]}\|X^\varepsilon_\lambda(s)-X^\varepsilon(s)\|^2_{F^*_{1,2}}\Big]=0.$$
\end{claim}
\begin{proof}
By It\^{o}'s formula we get that, for $\lambda', \lambda\in(0,1)$ and $t\in[0,T]$,
\begin{eqnarray}\label{eqn18}
\!\!\!\!\!\!\!\!&&\|X^\varepsilon_\lambda(t)-X^\varepsilon_{\lambda'}(t)\|^2_{F^*_{1,2,\varepsilon}}\nonumber\\
\!\!\!\!\!\!\!\!&&+2\int_0^t\big\langle\Psi(X^\varepsilon_\lambda(s))-\Psi(X^\varepsilon_{\lambda'}(s))+\lambda X^\varepsilon_\lambda(s)-\lambda'X^\varepsilon_{\lambda'}(s),X^\varepsilon_\lambda(s)-X^\varepsilon_{\lambda'}(s)\big\rangle_2ds\nonumber\\
=\!\!\!\!\!\!\!\!&&\int_0^t\int_{Z}\big\|f(s,X^\varepsilon_\lambda(s-),z)-f(s,X^\varepsilon_{\lambda'}(s-),z)\big\|^2_{F^*_{1,2,\varepsilon}}N(ds,dz)\nonumber\\
\!\!\!\!\!\!\!\!&&+2\int_0^t\!\!\int_{Z}\!\!\big\langle X^\varepsilon_\lambda(s-)-X^\varepsilon_{\lambda'}(s-),f(s,X^\varepsilon_\lambda(s-),z)-f(s,X^\varepsilon_{\lambda'}(s-),z)\big\rangle_{F^*_{1,2,\varepsilon}}\!\!\!\!\widetilde{N}(ds,dz).
\end{eqnarray}
From \cite[(3.27)]{RWX}, we know that
\begin{eqnarray}\label{eqn19}
\!\!\!\!\!\!\!\!&&2\int_0^t\big\langle\Psi(X^\varepsilon_\lambda(s))-\Psi(X^\varepsilon_{\lambda'}(s))+\lambda X^\varepsilon_\lambda(s)-\lambda'X^\varepsilon_{\lambda'}(s),X^\varepsilon_\lambda(s)-X^\varepsilon_{\lambda'}(s)\big\rangle_2ds\nonumber\\
\geq\!\!\!\!\!\!\!\!&&2\tilde{\alpha}\int_0^t\big|\Psi(X^\varepsilon_\lambda(s))-\Psi(X^\varepsilon_{\lambda'}(s))\big|_2^2ds\nonumber\\
\!\!\!\!\!\!\!\!&&+2\int_0^t\big\langle\lambda X^\varepsilon_\lambda(s)-\lambda'X^\varepsilon_{\lambda'}(s),X^\varepsilon_\lambda(s)-X^\varepsilon_{\lambda'}(s)\big\rangle_2ds.
\end{eqnarray}
Taking \eqref{eqn19} into \eqref{eqn18}, we obtain
\begin{eqnarray}\label{eqn20}
\!\!\!\!\!\!\!\!&&\|X^\varepsilon_\lambda(t)-X^\varepsilon_{\lambda'}(t)\|^2_{F^*_{1,2,\varepsilon}}+2\tilde{\alpha}\int_0^t\big|\Psi(X^\varepsilon_\lambda(s))-\Psi(X^\varepsilon_{\lambda'}(s))\big|_2^2ds\nonumber\\
\leq\!\!\!\!\!\!\!\!&&-2\int_0^t\big\langle\lambda X^\varepsilon_\lambda(s)-\lambda'X^\varepsilon_{\lambda'}(s),X^\varepsilon_\lambda(s)-X^\varepsilon_{\lambda'}(s)\big\rangle_2ds\nonumber\\
\!\!\!\!\!\!\!\!&&+\int_0^t\int_{Z}\big\|f(s,X^\varepsilon_\lambda(s-),z)-f(s,X^\varepsilon_{\lambda'}(s-),z)\big\|^2_{F^*_{1,2,\varepsilon}}N(ds,dz)\nonumber\\
\!\!\!\!\!\!\!\!&&+2\int_0^t\!\!\int_{Z}\!\!\big\langle X^\varepsilon_\lambda(s-)-X^\varepsilon_{\lambda'}(s-),f(s,X^\varepsilon_\lambda(s-),z)-f(s,X^\varepsilon_{\lambda'}(s-),z)\big\rangle_{F^*_{1,2,\varepsilon}}\!\!\!\!\widetilde{N}(ds,dz)\nonumber\\
\leq\!\!\!\!\!\!\!\!&&4(\lambda+\lambda')\int_0^t|X^\varepsilon_\lambda(s)|_2^2+|X^\varepsilon_{\lambda'}(s)|_2^2ds\nonumber\\
\!\!\!\!\!\!\!\!&&+\int_0^t\int_{Z}\big\|f(s,X^\varepsilon_\lambda(s-),z)-f(s,X^\varepsilon_{\lambda'}(s-),z)\big\|^2_{F^*_{1,2,\varepsilon}}N(ds,dz)\nonumber\\
\!\!\!\!\!\!\!\!&&+2\int_0^t\!\!\int_{Z}\!\!\big\langle X^\varepsilon_\lambda(s-)-X^\varepsilon_{\lambda'}(s-),f(s,X^\varepsilon_\lambda(s-),z)-f(s,X^\varepsilon_{\lambda'}(s-),z)\big\rangle_{F^*_{1,2,\varepsilon}}\!\!\!\!\widetilde{N}(ds,dz).
\end{eqnarray}
Taking expectation to both sides of \eqref{eqn20}, we obtain,
\begin{eqnarray}\label{eqn21}
\!\!\!\!\!\!\!\!&&\Bbb{E}\Big[\sup_{s\in[0,t]}\|X^\varepsilon_\lambda(s)-X^\varepsilon_{\lambda'}(s)\|^2_{F^*_{1,2,\varepsilon}}\Big]+2\tilde{\alpha}\Bbb{E}\int_0^t\big|\Psi(X^\varepsilon_\lambda(s))-\Psi(X^\varepsilon_{\lambda'}(s))\big|_2^2ds\nonumber\\
\leq\!\!\!\!\!\!\!\!&&4(\lambda+\lambda')\Bbb{E}\int_0^t|X^\varepsilon_\lambda(s)|_2^2+|X^\varepsilon_{\lambda'}(s)|_2^2ds\nonumber\\
\!\!\!\!\!\!\!\!&&+\Bbb{E}\int_0^t\int_{Z}\big\|f(s,X^\varepsilon_\lambda(s),z)-f(s,X^\varepsilon_{\lambda'}(s),z)\big\|^2_{F^*_{1,2,\varepsilon}}\nu(dz)ds\nonumber\\
\!\!\!\!\!\!\!\!&&+2\Bbb{E}\Bigg[\sup_{s\in[0,t]}\Big|\int_0^s\!\!\int_{Z}\!\!\big\langle X^\varepsilon_\lambda(l-)-X^\varepsilon_{\lambda'}(l-),f(l,X^\varepsilon_\lambda(l-),z)-f(l,X^\varepsilon_{\lambda'}(l-),z)\big\rangle_{F^*_{1,2,\varepsilon}}\!\!\!\!\widetilde{N}(dl,dz)\Big|\Bigg]\nonumber\\
:=\!\!\!\!\!\!\!\!&&4(\lambda+\lambda')\Bbb{E}\int_0^t|X^\varepsilon_\lambda(s)|_2^2+|X^\varepsilon_{\lambda'}(s)|_2^2ds+J_1(t)+J_2(t).
\end{eqnarray}
By assumption \textbf{(H3)} and \eqref{eqn22}, we get
\begin{eqnarray}\label{eqn23}
J_1(t)\!\!\!\!\!\!\!\!&&\leq\frac{1}{\sqrt{\varepsilon}}\Bbb{E}\int_0^t\int_{Z}\big\|f(s,X^\varepsilon_\lambda(s),z)-f(s,X^\varepsilon_{\lambda'}(s),z)\big\|^2_{F^*_{1,2}}\nu(dz)ds\nonumber\\
\!\!\!\!\!\!\!\!&&\leq \frac{C}{\sqrt{\varepsilon}}\Bbb{E}\int_0^t\big\|X^\varepsilon_\lambda(s)-X^\varepsilon_{\lambda'}(s)\big\|^2_{F^*_{1,2}}ds\nonumber\\
\!\!\!\!\!\!\!\!&&\leq \frac{C}{\sqrt{\varepsilon}}\Bbb{E}\int_0^t\big\|X^\varepsilon_\lambda(s)-X^\varepsilon_{\lambda'}(s)\big\|^2_{F^*_{1,2,\varepsilon}}ds.
\end{eqnarray}
Using the arguments similar to that used in \eqref{eqn15}, for $t\in[0,T]$, we get
\begin{eqnarray}\label{eqn24}
J_2(t)\leq\!\!\!\!\!\!\!\!&& C\Bbb{E}\Bigg[\Big|\int_0^t\int_{Z}\big\langle X^\varepsilon_\lambda(l-)-X^\varepsilon_{\lambda'}(l-),f(l,X^\varepsilon_\lambda(l-),z)-f(l,X^\varepsilon_{\lambda'}(l-),z)\big\rangle^2_{F^*_{1,2,\varepsilon}}N(dl,dz)\Big|^{\frac{1}{2}}\Bigg]\nonumber\\
\leq \!\!\!\!\!\!\!\!&& \Bbb{E}\Bigg[\Big|\sup_{l\in[0,t]}\|X^\varepsilon_\lambda(l)-X^\varepsilon_{\lambda'}(l)\|^2_{F^*_{1,2,\varepsilon}}\cdot\int_0^t\int_{Z}\|f(l,X^\varepsilon_\lambda(l-),z)-f(l,X^\varepsilon_{\lambda'}(l-),z)\|^2_{F^*_{1,2,\varepsilon}}N(dl,dz)\Big|^{\frac{1}{2}}\Bigg]\nonumber\\
\leq\!\!\!\!\!\!\!\!&&\frac{1}{2}\Bbb{E}\Big[\sup_{l\in[0,t]}\|X^\varepsilon_\lambda(l)-X^\varepsilon_{\lambda'}(l)\|^2_{F^*_{1,2,\varepsilon}}\Big]\nonumber\\
\!\!\!\!\!\!\!\!&&+C\Bbb{E}\int_0^t\int_{Z}\|f(l,X^\varepsilon_\lambda(l),z)-f(l,X^\varepsilon_{\lambda'}(l),z)\|^2_{F^*_{1,2,\varepsilon}}\nu(dz)dl\nonumber\\
\leq\!\!\!\!\!\!\!\!&&\frac{1}{2}\Bbb{E}\Big[\sup_{l\in[0,t]}\|X^\varepsilon_\lambda(l)-X^\varepsilon_{\lambda'}(l)\|^2_{F^*_{1,2,\varepsilon}}\Big]+\frac{C}{\sqrt{\varepsilon}}\Bbb{E}\int_0^t\big\|X^\varepsilon_\lambda(s)-X^\varepsilon_{\lambda'}(s)\big\|^2_{F^*_{1,2,\varepsilon}}ds.
\end{eqnarray}
Taking \eqref{eqn23} and \eqref{eqn24} into \eqref{eqn21}, we get
\begin{eqnarray*}
\!\!\!\!\!\!\!\!&&\Bbb{E}\Big[\sup_{s\in[0,t]}\|X^\varepsilon_\lambda(s)-X^\varepsilon_{\lambda'}(s)\|^2_{F^*_{1,2,\varepsilon}}\Big]+2\tilde{\alpha}\Bbb{E}\int_0^t\big|\Psi(X^\varepsilon_\lambda(s))-\Psi(X^\varepsilon_{\lambda'}(s))\big|_2^2ds\nonumber\\
\leq\!\!\!\!\!\!\!\!&&4(\lambda+\lambda')\Bbb{E}\int_0^t|X^\varepsilon_\lambda(s)|_2^2+|X^\varepsilon_{\lambda'}(s)|_2^2ds+\frac{1}{2}\Bbb{E}\Big[\sup_{l\in[0,t]}\|X^\varepsilon_\lambda(l)-X^\varepsilon_{\lambda'}(l)\|^2_{F^*_{1,2,\varepsilon}}\Big]\nonumber\\
\!\!\!\!\!\!\!\!&&+\frac{C}{\sqrt{\varepsilon}}\Bbb{E}\int_0^t\big\|X^\varepsilon_\lambda(s)-X^\varepsilon_{\lambda'}(s)\big\|^2_{F^*_{1,2,\varepsilon}}ds.
\end{eqnarray*}
Since $x\in L^2(\mu)$, Gronwall's lemma and Claim \ref{claim1} imply that for some constant $C\in(0,\infty)$, which is independent of $\lambda, \lambda', \varepsilon$,
\begin{eqnarray*}
\!\!\!\!\!\!\!\!&&\Bbb{E}\Big[\sup_{s\in[0,t]}\|X^\varepsilon_\lambda(s)-X^\varepsilon_{\lambda'}(s)\|^2_{F^*_{1,2,\varepsilon}}\Big]+4\tilde{\alpha}\Bbb{E}\int_0^t\big|\Psi(X^\varepsilon_\lambda(s))-\Psi(X^\varepsilon_{\lambda'}(s))\big|_2^2ds\nonumber\\
\leq\!\!\!\!\!\!\!\!&&e^{\frac{C}{\sqrt{\varepsilon}}}\cdot 16e^{C_1T}(|x|_2^2+C_2)(\lambda+\lambda').
\end{eqnarray*}
By \eqref{eqn22}, we know that
\begin{eqnarray}\label{eqn25}
\!\!\!\!\!\!\!\!&&\Bbb{E}\Big[\sup_{s\in[0,t]}\|X^\varepsilon_\lambda(s)-X^\varepsilon_{\lambda'}(s)\|^2_{F^*_{1,2}}\Big]+4\tilde{\alpha}\Bbb{E}\int_0^t\big|\Psi(X^\varepsilon_\lambda(s))-\Psi(X^\varepsilon_{\lambda'}(s))\big|_2^2ds\nonumber\\
\leq\!\!\!\!\!\!\!\!&&e^{\frac{C}{\sqrt{\varepsilon}}}\cdot 16e^{C_1T}(|x|_2^2+C_2)(\lambda+\lambda').
\end{eqnarray}
\eqref{eqn25} implies that there exists an $F^*_{1,2}$-valued c\`{a}dl\`{a}g $\mathcal{F}_t$-adapted process $\{X^\varepsilon(t)\}_{t\in[0,T]}$ such that $X^\varepsilon\in L^2(\Omega;L^\infty([0,T];F^*_{1,2}))$ and $X^\varepsilon\in D([0,T];F^*_{1,2})$, $\Bbb{P}$-a.s.. This together with Claim \ref{claim1} imply that
$$X^\varepsilon\in L^2([0,T]\times \Omega;L^2(\mu)).$$\hspace{\fill}$\Box$
\end{proof}

\begin{claim}\label{claim3}
$X^\varepsilon$ satisfies \eqref{eq:3} and $\int_0^\cdot\Psi(X^\varepsilon(s))ds\in C([0,T];F^*_{1,2})$, $\Bbb{P}$-a.s..
\end{claim}
\begin{proof}
First, let us clarify that $X^\varepsilon$ satisfies \eqref{eq:3}. From Claim \ref{claim2}, we know that as $\lambda\rightarrow0$,
\begin{eqnarray}\label{eqn26}
X^\varepsilon_\lambda\rightarrow X^\varepsilon\ \text{in}\ L^2(\Omega;L^\infty([0,T];F^*_{1,2})).
\end{eqnarray}
By BDG's inequality, \textbf{(H3)} and \eqref{eqn26}, we have
\begin{eqnarray*}
\!\!\!\!\!\!\!\!&&\Bbb{E}\Bigg[\sup_{t\in[0,T]}\Big\|\int_0^t\int_{Z}f(s,X^\varepsilon_\lambda(s-),z)-f(s,X^\varepsilon(s-),z)\widetilde{N}(ds,dz)\Big\|^2_{F^*_{1,2}}\Bigg]\\
\!\!\!\!\!\!\!\!&&\leq C\Bbb{E}\Bigg[\int_0^T\int_{Z}\big\|f(s,X^\varepsilon_\lambda(s),z)-f(s,X^\varepsilon(s),z)\big\|^2_{F^*_{1,2}}\nu(dz)ds\Bigg]\\
\!\!\!\!\!\!\!\!&&\leq C\Bbb{E}\Big[\int_0^T\big\|X^\varepsilon_\lambda(s)-X^\varepsilon(s)\big\|^2_{F^*_{1,2}}ds\Big]\\
\!\!\!\!\!\!\!\!&&\leq CT\Bbb{E}\Big[\sup_{s\in[0,T]}\big\|X^\varepsilon_\lambda(s)-X^\varepsilon(s)\big\|^2_{F^*_{1,2}}\Big]\\
\!\!\!\!\!\!\!\!&&\longrightarrow0,\ \ \text{as}\ \ \lambda\longrightarrow0,
\end{eqnarray*}
which means that as $\lambda\rightarrow0$,
\begin{eqnarray}\label{eqn27}
\!\!\!\!\!\!\!\!&&\int_0^\cdot\int_{Z}f(s,X^\varepsilon_\lambda(s-),z)\widetilde{N}(ds,dz)\nonumber\\
\!\!\!\!\!\!\!\!&&\longrightarrow \int_0^\cdot\int_{Z}f(s,X^\varepsilon(s-),z)\widetilde{N}(ds,dz)\ \text{in}\ L^2(\Omega;L^\infty([0,T];F^*_{1,2})).
\end{eqnarray}
Notice that
\begin{eqnarray*}
\!\!\!\!\!\!\!\!&&\Bbb{E}\int_0^T\big|\Psi(X^\varepsilon_\lambda(s))+\lambda X^\varepsilon_\lambda(s)\big|_2^2ds\nonumber\\
\leq\!\!\!\!\!\!\!\!&&\Bbb{E}\int_0^T\big|\Psi(X^\varepsilon_\lambda(s))\big|_2^2+\lambda^2\big|X^\varepsilon_\lambda(s)\big|_2^2ds\nonumber\\
\leq\!\!\!\!\!\!\!\!&&\big((Lip\Psi)^2+\lambda^2\big)\Bbb{E}\int_0^T\big|X^\varepsilon_\lambda(s)\big|_2^2ds,
\end{eqnarray*}
which indicates
\begin{eqnarray}\label{eqn31}
\Psi(X^\varepsilon_\lambda(\cdot))+\lambda X^\varepsilon_\lambda(\cdot)\ \text{converges\ weakly\  to\ some\ element}\ Y \ \text{in}\ L^2(\Omega;L^2([0,T];L^2(\mu))).
\end{eqnarray}
Recall that $\forall t\in[0,T]$,
\begin{eqnarray}\label{eqn28}
X^\varepsilon_\lambda(t)=x+\int_0^t(L-\varepsilon)(\Psi(X^\varepsilon_\lambda(s))+\lambda X^\varepsilon_\lambda(s))ds+\int_0^t\int_{Z}f(s,X^\varepsilon_\lambda(s-),z)\widetilde{N}(ds,dz),
\end{eqnarray}
holds in $(L^2(\mu))^*$. Notice that from \eqref{eqn26}-\eqref{eqn28}, we know $\forall t\in[0,T]$,
\begin{eqnarray}\label{Eq Zhai 1}
X^\varepsilon(t)=x+\int_0^t(L-\varepsilon)Y(s)ds+\int_0^t\int_{Z}f(s,X^\varepsilon(s-),z)\widetilde{N}(ds,dz)\ \text{holds\ in}\ (L^2(\mu))^*.
\end{eqnarray}
So, in order to prove that $X^\varepsilon$ satisfies \eqref{eq:3},
it remains to show $Y(\cdot)=\Psi(X^\varepsilon(\cdot))$, $dt\otimes\Bbb{P}$-a.s..

\skip 0.2cm
Now, applying It\^{o}'s formula to $\|X^\varepsilon(t)\|^2_{F^*_{1,2,\varepsilon}}$ in $F^*_{1,2}$, we get
\begin{eqnarray}\label{eqn32}
\|X^\varepsilon(t)\|^2_{F^*_{1,2,\varepsilon}}=\!\!\!\!\!\!\!\!&&\|x\|^2_{F^*_{1,2,\varepsilon}}-2\int_0^t\big\langle Y(s),X^\varepsilon(s)\big\rangle_2ds\nonumber\\
\!\!\!\!\!\!\!\!&&+2\int_0^t\int_{Z}\big\langle X^\varepsilon(s-),f(s,X^\varepsilon(s-),z)\big\rangle_{F^*_{1,2,\varepsilon}}\widetilde{N}(ds,dz)\nonumber\\
\!\!\!\!\!\!\!\!&&+\int_0^t\int_{Z}\|f(s,X^\varepsilon(s-),z)\|^2_{F^*_{1,2,\varepsilon}}N(ds,dz).
\end{eqnarray}
Applying It\^{o}'s formula to the process $X^\varepsilon_\lambda$, see \cite[P304]{BLZ}, we have
\begin{eqnarray*}
\!\!\!\!\!\!\!\!&&e^{-Kt}\|X^\varepsilon_\lambda(t)\|^2_{F^*_{1,2,\varepsilon}}\nonumber\\
=\!\!\!\!\!\!\!\!&&\|x\|^2_{F^*_{1,2,\varepsilon}}+2\int_0^t\int_{Z}e^{-Ks}\big\langle X^\varepsilon_\lambda(s-),f(s,X^\varepsilon_\lambda(s-),z)\big\rangle_{F^*_{1,2,\varepsilon}}\widetilde{N}(ds,dz)\nonumber\\
\!\!\!\!\!\!\!\!&&+\int_0^t\int_{Z}e^{-Ks}\|f(s,X^\varepsilon_\lambda(s-),z)\|^2_{F^*_{1,2,\varepsilon}}N(ds,dz)\nonumber\\
\!\!\!\!\!\!\!\!&&+\int_0^te^{-Ks}\cdot\Big(2_{(L^2(\mu))^*}\big\langle(L-\varepsilon)(\Psi(X^\varepsilon_\lambda(s))+\lambda X^\varepsilon_\lambda(s)),X^\varepsilon_\lambda(s)\big\rangle_{L^2(\mu)}-K\|X^\varepsilon_\lambda(s)\|^2_{F^*_{1,2,\varepsilon}}\Big)ds.
\end{eqnarray*}
Taking expectation of both sides to the above equality and by \textbf{(H1)}, we get for $\phi\in L^\infty([0,T];L^2(\Omega;F^*_{1,2}))\cap L^2([0,T]\times\Omega;\mathcal{B}\mathcal{F},dt\otimes\Bbb{P};L^2(\mu))$,
\begin{eqnarray*}
\!\!\!\!\!\!\!\!&&\Bbb{E}\Big[e^{-Kt}\|X^\varepsilon_\lambda(t)\|^2_{F^*_{1,2,\varepsilon}}\Big]-\Bbb{E}\|x\|^2_{F^*_{1,2,\varepsilon}}\nonumber\\
\leq\!\!\!\!\!\!\!\!&&\Bbb{E}\!\Bigg[\!\int_0^t\!\!e^{-Ks}\Big(2_{(L^2(\mu))^*}\big\langle(L-\varepsilon)\big(\Psi(X^\varepsilon_\lambda(s))+\lambda X^\varepsilon_\lambda(s)\big)-(L-\varepsilon)\big(\Psi(\phi(s))+\lambda \phi(s)\big),X^\varepsilon_\lambda(s)-\phi(s)\big\rangle_{L^2(\mu)}\nonumber\\
\!\!\!\!\!\!\!\!&&~~~~-K\|X^\varepsilon_\lambda(s)-\phi(s)\|^2_{F^*_{1,2,\varepsilon}}+\int_{Z}\|f(s,X^\varepsilon_\lambda(s),z)-f(s,\phi(s),z)\|^2_{F^*_{1,2,\varepsilon}}\nu(dz)\Big)ds\Bigg]\nonumber\\
\!\!\!\!\!\!\!\!&&+\Bbb{E}\!\Bigg\{\!\int_0^t\!\!e^{-Ks}\Bigg(2_{(L^2(\mu))^*}\big\langle(L-\varepsilon)\big(\Psi(X^\varepsilon_\lambda(s))+\lambda X^\varepsilon_\lambda(s)\big)-(L-\varepsilon)\big(\Psi(\phi(s))+\lambda \phi(s)\big),\phi(s)\big\rangle_{L^2(\mu)}\nonumber\\
\!\!\!\!\!\!\!\!&&~~~~+2_{(L^2(\mu))^*}\big\langle(L-\varepsilon)(\phi(s))+\lambda \phi(s)),X^\varepsilon_\lambda(s)\big\rangle_{L^2(\mu)}-2K\big\langle X^\varepsilon_\lambda(s),\phi(s)\big\rangle_{F^*_{1,2,\varepsilon}}+K\|\phi(s)\|^2_{F^*_{1,2,\varepsilon}}\nonumber\\
\!\!\!\!\!\!\!\!&&~~~~+\int_{Z}\Big(2\big\langle f(s,X^\varepsilon_\lambda(s),z),f(s,\phi(s),z)\big\rangle_{F^*_{1,2,\varepsilon}}-\|f(s,\phi(s),z)\|^2_{F^*_{1,2,\varepsilon}}\Big)\nu(dz)\Bigg)ds\Bigg\}.
\end{eqnarray*}
Choosing $K$ to be $\frac{2(1-\varepsilon)^2}{(Lip\Psi+1)^{-1}}+C_3$ as in \eqref{eqn8}. After some simple rearrangement, we find
\begin{eqnarray*}
\!\!\!\!\!\!\!\!&&\Bbb{E}\Big[e^{-Kt}\|X^\varepsilon_\lambda(t)\|^2_{F^*_{1,2,\varepsilon}}\Big]-\Bbb{E}\|x\|^2_{F^*_{1,2,\varepsilon}}\nonumber\\
\leq\!\!\!\!\!\!\!\!&&\Bbb{E}\!\Bigg\{\!\int_0^t\!\!e^{-Ks}\Bigg(2_{(L^2(\mu))^*}\big\langle(L-\varepsilon)\big(\Psi(X^\varepsilon_\lambda(s))+\lambda X^\varepsilon_\lambda(s)\big)-(L-\varepsilon)\big(\Psi(\phi(s))+\lambda \phi(s)\big),\phi(s)\big\rangle_{L^2(\mu)}\nonumber\\
\!\!\!\!\!\!\!\!&&~~~~+2_{(L^2(\mu))^*}\big\langle(L-\varepsilon)\big(\phi(s))+\lambda \phi(s)\big),X^\varepsilon_\lambda(s)\big\rangle_{L^2(\mu)}-2K\big\langle X^\varepsilon_\lambda(s),\phi(s)\big\rangle_{F^*_{1,2,\varepsilon}}+K\|\phi(s)\|^2_{F^*_{1,2,\varepsilon}}\nonumber\\
\!\!\!\!\!\!\!\!&&~~~~+\int_{Z}\Big(2\big\langle f(s,X^\varepsilon_\lambda(s),z),f(s,\phi(s),z)\big\rangle_{F^*_{1,2,\varepsilon}}-\|f(s,\phi(s),z)\|^2_{F^*_{1,2,\varepsilon}}\Big)\nu(dz)\Bigg)ds\Bigg\}.
\end{eqnarray*}
This together with \eqref{eqn26}, \eqref{eqn27} and \eqref{eqn31} gives for any nonnegative function $\psi\in L^\infty([0,T];dt)$ that
\begin{eqnarray*}
\!\!\!\!\!\!\!\!&&\Bbb{E}\Big[\int_0^T\psi(t)\big(e^{-Kt}\|X^\varepsilon(t)\|^2_{F^*_{1,2,\varepsilon}}-\|x\|^2_{F^*_{1,2,\varepsilon}}\big)dt\Big]\nonumber\\
\leq\!\!\!\!\!\!\!\!&&\lim_{\lambda\rightarrow0}\inf\Bbb{E}\Big[\int_0^T\psi(t)\big(e^{-Kt}\|X^\varepsilon_\lambda(t)\|^2_{F^*_{1,2,\varepsilon}}-\|x\|^2_{F^*_{1,2,\varepsilon}}\big)dt\Big]\nonumber\\
\leq\!\!\!\!\!\!\!\!&&\lim_{\lambda\rightarrow0}\inf\Bbb{E}\Bigg\{\int_0^T\psi(t)\Bigg(\int_0^te^{-Ks}\Big(2_{(L^2(\mu))^*}\big\langle(L-\varepsilon)\big(\Psi(X^\varepsilon_\lambda(s))+\lambda X^\varepsilon_\lambda(s)\big)\nonumber\\
\!\!\!\!\!\!\!\!&&~~~~~~~~~~~~~~~~~~~~~~~~~~~~~~~~~~~~~~~~~~~~~~~~~~~~~~~-(L-\varepsilon)\big(\Psi(\phi(s))+\lambda \phi(s)\big),\phi(s)\big\rangle_{L^2(\mu)}\nonumber\\
\!\!\!\!\!\!\!\!&&~~~~+2_{(L^2(\mu))^*}\big\langle(L-\varepsilon)\big(\Psi(\phi(s))+\lambda \phi(s)\big),X^\varepsilon_\lambda(s)\big\rangle_{L^2(\mu)}-2K\langle X^\varepsilon_\lambda(s),\phi(s)\rangle_{F^*_{1,2,\varepsilon}}+K\|\phi(s)\|^2_{F^*_{1,2,\varepsilon}}\nonumber\\
\!\!\!\!\!\!\!\!&&~~~~+\int_{Z}\big(2\langle f(s,X^\varepsilon_\lambda(s),z),f(s,\phi(s),z)\rangle_{F^*_{1,2,\varepsilon}}-\|f(s,\phi(s),z)\|^2_{F^*_{1,2,\varepsilon}}\big)\nu(dz)\Big)ds\Bigg)dt\Bigg\}.
\end{eqnarray*}
Again by \eqref{eqn26}, \eqref{eqn27} and \eqref{eqn31}, we infer
\begin{eqnarray}\label{eqn33}
\!\!\!\!\!\!\!\!&&\Bbb{E}\Big[\int_0^T\psi(t)\big(e^{-Kt}\|X^\varepsilon(t)\|^2_{F^*_{1,2,\varepsilon}}-\|x\|^2_{F^*_{1,2,\varepsilon}}\big)dt\Big]\nonumber\\
\leq\!\!\!\!\!\!\!\!&&\Bbb{E}\Bigg\{\int_0^T\psi(t)\Bigg(\int_0^te^{-Ks}\Big(2_{(L^2(\mu))^*}\big\langle(L-\varepsilon)Y(s)-(L-\varepsilon)(\Psi(\phi(s)),\phi(s)\big\rangle_{L^2(\mu)}\nonumber\\
\!\!\!\!\!\!\!\!&&~~~~+2_{(L^2(\mu))^*}\big\langle(L-\varepsilon)(\Psi(\phi(s)),X^\varepsilon(s)\big\rangle_{L^2(\mu)}-2K\langle X^\varepsilon(s),\phi(s)\rangle_{F^*_{1,2,\varepsilon}}+K\|\phi(s)\|^2_{F^*_{1,2,\varepsilon}}\nonumber\\
\!\!\!\!\!\!\!\!&&~~~~+\int_{Z}\!\!\big(2\langle f(s,X^\varepsilon(s),z),f(s,\phi(s),z)\rangle_{F^*_{1,2,\varepsilon}}\!-\!\|f(s,\phi(s),z)\|^2_{F^*_{1,2,\varepsilon}}\big)\nu(dz)\Big)ds\!\!\Bigg)dt\Bigg\}.
\end{eqnarray}
On the other hand, by \eqref{eqn32} we infer that
\begin{eqnarray}\label{eqn34}
\!\!\!\!\!\!\!\!&&\Bbb{E}\Big[\int_0^te^{-Ks}\|X^\varepsilon(s)\|^2_{F^*_{1,2,\varepsilon}}-\|x\|^2_{F^*_{1,2,\varepsilon}}ds\Big]\nonumber\\
=\!\!\!\!\!\!\!\!&&\Bbb{E}\Bigg[\int_0^te^{-Ks}\Big(2_{(L^2(\mu))^*}\big\langle(L-\varepsilon)Y(s),X^\varepsilon(s)\big\rangle_{L^2(\mu)}-K\|X^\varepsilon(s)\|^2_{F^*_{1,2,\varepsilon}}\nonumber\\
\!\!\!\!\!\!\!\!&&~~~~~~~~~~~~~~~~~~~~~~~~~~~+\int_{Z}\|f(s,X^\varepsilon(s),z)\|^2_{F^*_{1,2,\varepsilon}}\nu(dz)\Big)ds\Bigg].
\end{eqnarray}
Combining \eqref{eqn34} with \eqref{eqn33}, we have
\begin{eqnarray}\label{eqn35}
\!\!\!\!\!\!\!\!&&\Bbb{E}\Bigg[\int_0^T\psi(t)\Big(\int_0^te^{-Ks}\big(2_{(L^2(\mu))^*}\big\langle(L-\varepsilon)Y(s)-(L-\varepsilon)(\Psi(\phi(s))),X^\varepsilon(s)-\phi(s)\big\rangle_{L^2(\mu)}\nonumber\\
\!\!\!\!\!\!\!\!&&~~~~~~-K\|X^\varepsilon(s)-\phi(s)\|^2_{F^*_{1,2,\varepsilon}}+\int_{Z}\|f(s,\phi(s),z)-f(s,X^\varepsilon(s),z)\|^2_{F^*_{1,2,\varepsilon}}\nu(dz)\big)ds\Big)dt\Bigg]\nonumber\\
\leq\!\!\!\!\!\!\!\!&&0.
\end{eqnarray}
Note that \eqref{eqn35} also implies
\begin{eqnarray}\label{eqn36}
\!\!\!\!\!\!\!\!&&\Bbb{E}\Bigg[\int_0^T\psi(t)\Big(\int_0^te^{-Ks}\big(2_{(L^2(\mu))^*}\big\langle(L-\varepsilon)Y(s)-(L-\varepsilon)(\Psi(\phi(s))),X^\varepsilon(s)-\phi(s)\big\rangle_{L^2(\mu)}\nonumber\\
\!\!\!\!\!\!\!\!&&~~~~~~-K\|X^\varepsilon(s)-\phi(s)\|^2_{F^*_{1,2,\varepsilon}}\big)ds\Big)dt\Bigg]\leq0.
\end{eqnarray}
Put $\phi=X^\varepsilon-\epsilon\tilde{\phi}u$ in \eqref{eqn36} for $\tilde{\phi}\in L^\infty([0,T]\times\Omega;dt\otimes\Bbb{P};\Bbb{R})$ and $u\in L^2(\mu)$, divide both sides by $\epsilon$ and let $\epsilon\rightarrow0$. Then we have
\begin{eqnarray*}
\!\!\!\!\!\!\!\!&&\Bbb{E}\Bigg[\int_0^T\psi(t)\Big(\int_0^te^{-Ks}\big(2_{(L^2(\mu))^*}\big\langle(L-\varepsilon)Y(s)-(L-\varepsilon)(\Psi(X^\varepsilon(s))),u\big\rangle_{L^2(\mu)}\big)ds\Big)dt\Bigg]\leq0.
\end{eqnarray*}
Hence, we infer
\begin{eqnarray}\label{eqn35.1}
Y(\cdot)=\Psi(X^\varepsilon(\cdot)), dt\otimes\Bbb{P}\text{-}a.s..
\end{eqnarray}

Next, let us prove that $\int_0^\cdot\Psi(X^\varepsilon(s))ds\in C([0,T];F_{1,2})$, $\Bbb{P}$-a.s.. On the one hand, from \eqref{eqn28} and \cite[Remark 1.1]{BLZ}, we know that
$$\int_0^\cdot(L-\varepsilon)(\Psi(X^\varepsilon_\lambda(s))+\lambda X^\varepsilon_\lambda(s))ds\in C([0,T];(L^2(\mu))^*),\ \Bbb{P}\text{-a.s}.,$$
$$X^\varepsilon_\lambda\in D([0,T];(L^2(\mu))^*),\ \Bbb{P}\text{-a.s}.,$$
$$\int_0^\cdot\int_{Z}f(s,X^\varepsilon_\lambda(s-),z)\widetilde{N}(ds,dz)\in D([0,T];(L^2(\mu))^*),\ \Bbb{P}\text{-a.s}.,$$
on the other hand, from Claim \ref{claim2}, we know that $X^\varepsilon_\lambda\in D([0,T];F^*_{1,2})$, $\Bbb{P}$-a.s., and
$$\int_0^\cdot\int_{Z}f(s,X^\varepsilon_\lambda(s-),z)\widetilde{N}(ds,dz)\in D([0,T];F^*_{1,2}),\ \Bbb{P}\text{-a.s}..$$
So
$$\int_0^\cdot(L-\varepsilon)(\Psi(X^\varepsilon_\lambda(s))+\lambda X^\varepsilon_\lambda(s))ds\in C([0,T];F^*_{1,2}),\ \Bbb{P}\text{-a.s}.,$$
apparently,
\begin{eqnarray}\label{eqn29}
\int_0^\cdot\Psi(X^\varepsilon_\lambda(s))+\lambda X^\varepsilon_\lambda(s)ds\in C([0,T];F_{1,2}),\ \Bbb{P}\text{-a.s.}.
\end{eqnarray}
Taking \eqref{eqn26}-\eqref{eqn28}, \eqref{eqn35.1} and \eqref{eqn29} into account, we know that as $\lambda\rightarrow0$,
\begin{eqnarray}\label{eqn290}
\int_0^\cdot\Psi(X^\varepsilon_\lambda(s))+\lambda X^\varepsilon_\lambda(s)ds\rightarrow\int_0^\cdot\Psi(X^\varepsilon(s))ds\ \text{in}\ L^2(\Omega;C([0,T];F_{1,2})),
\end{eqnarray}
which indicates $\int_0^\cdot\Psi(X^\varepsilon(s))ds\in C([0,T];F_{1,2})$, $\Bbb{P}$-a.s..

The proof of Claim \ref{claim3} is complete. \hspace{\fill}$\Box$
\end{proof}

\vspace{2mm}
\textbf{Uniqueness}
\vspace{2mm}

If $X^\varepsilon_1$, $X^\varepsilon_2$ are two solutions to \eqref{eq:3}, we have $\Bbb{P}$-a.s.,
\begin{eqnarray*}
\!\!\!\!\!\!\!\!&&X^\varepsilon_1(t)-X^\varepsilon_2(t)+(\varepsilon-L)\int_0^t\Psi(X^\varepsilon_1(s))-\Psi(X^\varepsilon_2(s))ds\nonumber\\
=\!\!\!\!\!\!\!\!&&\int_0^t\int_{Z}f(s,X^\varepsilon_1(s-),z)-f(s,X^\varepsilon_2(s-),z)\widetilde{N}(ds,dz),\ \forall t\in[0,T].
\end{eqnarray*}
Applying It\^{o}'s formula to $\|X^\varepsilon_1(t)-X^\varepsilon_2(t)\|^2_{F^*_{1,2,\varepsilon}}$ in $F^*_{1,2}$, we get
\begin{eqnarray}\label{eqn37}
\!\!\!\!\!\!\!\!&&\|X^\varepsilon_1(t)-X^\varepsilon_2(t)\|^2_{F^*_{1,2,\varepsilon}}+2\int_0^t\big\langle\Psi(X^\varepsilon_1(s))-\Psi(X^\varepsilon_2(s)),X^\varepsilon_1(s)-X^\varepsilon_2(s)\big\rangle_2ds\nonumber\\
=\!\!\!\!\!\!\!\!&&\int_0^t\int_{Z}\|f(s,X^\varepsilon_1(s-),z)-f(s,X^\varepsilon_2(s-),z)\|^2_{F^*_{1,2,\varepsilon}}N(ds,dz)\nonumber\\
\!\!\!\!\!\!\!\!&&+2\int_0^t\!\int_{Z}\big\langle X^\varepsilon_1(s-)-X^\varepsilon_2(s-),f(s,X^\varepsilon_1(s-),z)-f(s,X^\varepsilon_2(s-),z)\big\rangle_{F^*_{1,2,\varepsilon}}\!\!\widetilde{N}(ds,dz).
\end{eqnarray}
Since $\Psi$ is Lipschitz, we have
\begin{eqnarray}\label{eqn38}
\big(\Psi(r)-\Psi(r')\big)(r-r')\geq (Lip\Psi+1)^{-1}|\Psi(r)-\Psi(r')|^2,\ \forall r, r'\in\Bbb{R}.
\end{eqnarray}
Taking expectation of both sides to \eqref{eqn37}, then taking \eqref{eqn38} and \textbf{(H3)} into account, we obtain
\begin{eqnarray}\label{eqn381}
\!\!\!\!\!\!\!\!&&\Bbb{E}\|X^\varepsilon_1(t)-X^\varepsilon_2(t)\|^2_{F^*_{1,2,\varepsilon}}+2(Lip\Psi+1)^{-1}\Bbb{E}\int_0^t|\Psi(X^\varepsilon_1(s))-\Psi(X^\varepsilon_2(s))|_2^2ds\nonumber\\
\leq\!\!\!\!\!\!\!\!&&C_3\int_0^t\Bbb{E}\|X^\varepsilon_1(s)-X^\varepsilon_2(s)\|^2_{F^*_{1,2,\varepsilon}}ds.
\end{eqnarray}
The second term in the left-hand side of the above inequality is positive, thus we have
\begin{eqnarray*}
\Bbb{E}\|X^\varepsilon_1(t)-X^\varepsilon_2(t)\|^2_{F^*_{1,2,\varepsilon}}\leq C_3\int_0^t\Bbb{E}\|X^\varepsilon_1(s)-X^\varepsilon_2(s)\|^2_{F^*_{1,2,\varepsilon}}ds.
\end{eqnarray*}
By Gronwall's inequality, we get $X^\varepsilon_1(t)=X^\varepsilon_2(t)$, $\Bbb{P}$-a.s., $\forall t\in[0,T]$, which indicates the uniqueness.
Hence the proof of Proposition \ref{Th2} is complete.\hspace{\fill}$\Box$
\end{proof}
\vspace{2mm}
\smallskip

\section{Proof of Theorem \ref{Th1}}
Based on Proposition \ref{Th2}, we are now ready to prove our main result Theorem \ref{Th1}. The idea is to prove the sequence $\{X^\varepsilon\}_{\varepsilon\in(0,1)}$ converges to the solution of \eqref{eq:1} as $\varepsilon\rightarrow0$.
\setcounter{equation}{0}
 \setcounter{definition}{0}

\begin{proof}
First, we rewrite \eqref{eq:3} as following:
\begin{eqnarray*}
\!\!\!\!\!\!\!\!&&X^\varepsilon(t)+(1-L)\int_0^t\Psi(X^\varepsilon(s))ds\nonumber\\
=\!\!\!\!\!\!\!\!&&x+(1-\varepsilon)\int_0^t\Psi(X^\varepsilon(s))ds+\int_0^t\int_{Z}f(s,X^\varepsilon(s-),z)\widetilde{N}(ds,dz).
\end{eqnarray*}
Apply It\^{o}'s formula to $\|X^\varepsilon(t)\|^2_{F^*_{1,2}}$, and after taking expectation to both sides, we have
\begin{eqnarray}\label{eqn39}
\!\!\!\!\!\!\!\!&&\Bbb{E}\|X^\varepsilon(t)\|^2_{F^*_{1,2}}+2\Bbb{E}\int_0^t\langle\Psi(X^\varepsilon(s)),X^\varepsilon(s)\rangle_2ds\nonumber\\
=\!\!\!\!\!\!\!\!&&\Bbb{E}\|x\|^2_{F^*_{1,2}}+2(1-\varepsilon)\Bbb{E}\int_0^t\langle\Psi(X^\varepsilon(s)),X^\varepsilon(s)\rangle_{F^*_{1,2}}ds\nonumber\\
\!\!\!\!\!\!\!\!&&+\Bbb{E}\int_0^t\int_{Z}\|f(s,X^\varepsilon(s-),z)\|^2_{F^*_{1,2}}N(ds,dz).
\end{eqnarray}
Since $\Psi$ is Lipschitz, we have
\begin{eqnarray}\label{eqn40}
\Psi(r)r\geq\tilde{\alpha}|\Psi(r)|^2,\ \forall r\in\Bbb{R}.
\end{eqnarray}
By \eqref{eqn39}, \eqref{eqn40} and \textbf{(H2)}, we have
\begin{eqnarray}\label{eqn41}
\!\!\!\!\!\!\!\!&&\Bbb{E}\|X^\varepsilon(t)\|^2_{F^*_{1,2}}+2\tilde{\alpha}\Bbb{E}\int_0^t|\Psi(X^\varepsilon(s))|_2^2ds\nonumber\\
\leq\!\!\!\!\!\!\!\!&&\Bbb{E}\|x\|^2_{F^*_{1,2}}+2\Bbb{E}\int_0^t\|\Psi(X^\varepsilon(s))\|_{F^*_{1,2}}\cdot\|X^\varepsilon(s)\|_{F^*_{1,2}}ds\nonumber\\
\!\!\!\!\!\!\!\!&&+C_1\Bbb{E}\int_0^t\|X^\varepsilon(s)\|^2_{F^*_{1,2}}ds+C_1.
\end{eqnarray}
Since $L^2(\mu)$ is continuously embedded into $F^*_{1,2}$, and by Young's inequality, we know that
\begin{eqnarray}\label{eqn42}
\!\!\!\!\!\!\!\!&&\Bbb{E}\int_0^t\|\Psi(X^\varepsilon(s))\|_{F^*_{1,2}}\cdot\|X^\varepsilon(s)\|_{F^*_{1,2}}ds\nonumber\\
\leq\!\!\!\!\!\!\!\!&&\Bbb{E}\int_0^t|\Psi(X^\varepsilon(s))|_2\cdot\|X^\varepsilon(s)\|_{F^*_{1,2}}ds\nonumber\\
\leq\!\!\!\!\!\!\!\!&&\tilde{\alpha}\Bbb{E}\int_0^t|\Psi(X^\varepsilon(s))|_2^2ds+\frac{1}{4\tilde{\alpha}}\Bbb{E}\int_0^t\|X^\varepsilon(s)\|^2_{F^*_{1,2}}ds.
\end{eqnarray}
Taking \eqref{eqn42} into \eqref{eqn41}, after some simple rearrangements, we get that
\begin{eqnarray*}
\!\!\!\!\!\!\!\!&&\Bbb{E}\|X^\varepsilon(t)\|^2_{F^*_{1,2}}+\tilde{\alpha}\Bbb{E}\int_0^t|\Psi(X^\varepsilon(s))|_2^2ds\nonumber\\
\leq\!\!\!\!\!\!\!\!&&\Bbb{E}\|x\|^2_{F^*_{1,2}}+(\frac{1}{2\tilde{\alpha}}+C_1)\Bbb{E}\int_0^t\|X^\varepsilon(s)\|^2_{F^*_{1,2}}ds+C_1.
\end{eqnarray*}
By Gronwall's inequality, we know that
\begin{eqnarray}\label{eqn43}
\Bbb{E}\|X^\varepsilon(t)\|^2_{F^*_{1,2}}+\tilde{\alpha}\Bbb{E}\int_0^t|\Psi(X^\varepsilon(s))|_2^2ds\leq\big(\|x\|^2_{F^*_{1,2}}+C_1\big)\cdot e^{(\frac{1}{2\tilde{\alpha}}+C_1)T}.
\end{eqnarray}
 In the following, we will prove the convergence of $\{X^\varepsilon\}_{\varepsilon\in(0,1)}$.

 Apply It\^{o}'s formula to $\|X^\varepsilon(t)-X^{\varepsilon'}(t)\|^2_{F^*_{1,2}}$, $\varepsilon, \varepsilon'\in(0,1)$, we get, for all $t\in[0,T]$,
 \begin{eqnarray}\label{eqn44}
\!\!\!\!\!\!\!\!&&\|X^\varepsilon(t)-X^{\varepsilon'}(t)\|^2_{F^*_{1,2}}+2\int_0^t\big\langle\Psi(X^\varepsilon(s))-\Psi(X^{\varepsilon'}(s)),X^\varepsilon(s)-X^{\varepsilon'}(s)\big\rangle_2ds\nonumber\\
=\!\!\!\!\!\!\!\!&&2\int_0^t\big\langle\Psi(X^\varepsilon(s))-\Psi(X^{\varepsilon'}(s)),X^\varepsilon(s)-X^{\varepsilon'}(s)\big\rangle_{F^*_{1,2}}ds\nonumber\\
\!\!\!\!\!\!\!\!&&-2\int_0^t\big\langle\varepsilon\Psi(X^\varepsilon(s))-\varepsilon'\Psi(X^{\varepsilon'}(s)),X^\varepsilon(s)-X^{\varepsilon'}(s)\big\rangle_{F^*_{1,2}}ds\nonumber\\
\!\!\!\!\!\!\!\!&&+\int_0^t\int_{Z}\|f(s,X^\varepsilon(s-),z)-f(s,X^{\varepsilon'}(s-),z)\|^2_{F^*_{1,2}}N(ds,dz)\nonumber\\
\!\!\!\!\!\!\!\!&&+2\int_0^t\int_{Z}\big\langle X^\varepsilon(s-)-X^{\varepsilon'}(s-),f(s,X^\varepsilon(s-),z)-f(s,X^{\varepsilon'}(s-),z)\big\rangle_{F^*_{1,2}}\widetilde{N}(ds,dz).
\end{eqnarray}
Since $L^2(\mu)$ continuously embedded into $F^*_{1,2}$, the second term in the right-hand side of \eqref{eqn44} can be dominated by
\begin{eqnarray}\label{eqn45}
\!\!\!\!\!\!\!\!&&-2\int_0^t\big\langle\varepsilon\Psi(X^\varepsilon(s))-\varepsilon'\Psi(X^{\varepsilon'}(s)),X^\varepsilon(s)-X^{\varepsilon'}(s)\big\rangle_{F^*_{1,2}}ds\nonumber\\
\leq\!\!\!\!\!\!\!\!&&2C\int_0^t\big(\varepsilon|\Psi(X^\varepsilon(s))|_2+\varepsilon'|\Psi(X^{\varepsilon'}(s))|_2\big)\cdot\|X^\varepsilon(s)-X^{\varepsilon'}(s)\|^2_{F^*_{1,2}}ds.
\end{eqnarray}
From \cite[(3.42)]{RWX}, we know that
\begin{eqnarray}\label{eqn46}
\!\!\!\!\!\!\!\!&&2\int_0^t\big\langle\Psi(X^\varepsilon(s))-\Psi(X^{\varepsilon'}(s)),X^\varepsilon(s)-X^{\varepsilon'}(s)\big\rangle_2ds\nonumber\\
\geq\!\!\!\!\!\!\!\!&&2\tilde{\alpha}\int_0^t|\Psi(X^\varepsilon(s))-\Psi(X^{\varepsilon'}(s))|_2^2ds.
\end{eqnarray}
Taking \eqref{eqn45} and \eqref{eqn46} into \eqref{eqn44}, we get that
\begin{eqnarray}\label{eqn47}
\!\!\!\!\!\!\!\!&&\|X^\varepsilon(t)-X^{\varepsilon'}(t)\|^2_{F^*_{1,2}}+2\tilde{\alpha}\int_0^t|\Psi(X^\varepsilon(s))-\Psi(X^{\varepsilon'}(s))|_2^2ds\nonumber\\
\leq\!\!\!\!\!\!\!\!&&C_1\int_0^t|\Psi(X^\varepsilon(s))-\Psi(X^{\varepsilon'}(s))|_2\cdot\|X^\varepsilon(s)-X^{\varepsilon'}(s)\|_{F^*_{1,2}}ds\nonumber\\
\!\!\!\!\!\!\!\!&&+C_2\int_0^t\big(\varepsilon|\Psi(X^\varepsilon(s))|_2+\varepsilon'|\Psi(X^{\varepsilon'}(s))|_2\big)\cdot\|X^\varepsilon(s)-X^{\varepsilon'}(s)\|_{F^*_{1,2}}ds\nonumber\\
\!\!\!\!\!\!\!\!&&+\int_0^t\int_{Z}\|f(s,X^\varepsilon(s-),z)-f(s,X^{\varepsilon'}(s-),z)\|^2_{F^*_{1,2}}N(ds,dz)\nonumber\\
\!\!\!\!\!\!\!\!&&+2\int_0^t\int_{Z}\big\langle X^\varepsilon(s-)-X^{\varepsilon'}(s-),f(s,X^\varepsilon(s-),z)-f(s,X^{\varepsilon'}(s-),z)\big\rangle_{F^*_{1,2}}\widetilde{N}(ds,dz).
\end{eqnarray}
Taking expectation to both sides of \eqref{eqn47}, by Young's equality, BDG's inequality and \textbf{(H3)}, we obtain that, for all $t\in[0,T]$,
\begin{eqnarray*}
&&\Bbb{E}\Big[\sup_{s\in[0,t]}\big\|X^\varepsilon(s)-X^{\varepsilon'}(s)\big\|^2_{F^*_{1,2}}\Big]
+2\tilde{\alpha}\Bbb{E}\int_0^t\big|\Psi(X^\varepsilon(s))-\Psi(X^{\varepsilon'}(s))\big|_2^2ds\\
\leq\!\!\!\!\!\!\!\!&&\frac{1}{2}\Bbb{E}\Big[\sup_{s\in[0,t]}\big\|X^\varepsilon(s)-X^{\varepsilon'}(s)\big\|^2_{F^*_{1,2}}\Big]+
\tilde{\alpha}\Bbb{E}\int_0^t\big|\Psi(X^\varepsilon(s))-\Psi(X^{\varepsilon'}(s))\big|_2^2ds\\
&&+C_1\Bbb{E}\int_0^t\big\|X^\varepsilon(s)-X^{\varepsilon'}(s)\big\|^2_{F^*_{1,2}}ds
+C_2\Bbb{E}\int_0^t\big(\varepsilon|\Psi(X^\varepsilon(s))|^2_2+\varepsilon'|\Psi(X^{\varepsilon'}(s))|_2^2\big)ds.
\end{eqnarray*}
This yields
\begin{eqnarray}\label{eqn48}
&&\Bbb{E}\Big[\sup_{s\in[0,t]}\big\|X^\varepsilon(s)-X^{\varepsilon'}(s)\big\|^2_{F^*_{1,2}}\Big]
+2\tilde{\alpha}\Bbb{E}\int_0^t\big|\Psi(X^\varepsilon(s))-\Psi(X^{\varepsilon'}(s))\big|_2^2ds\nonumber\\
\leq\!\!\!\!\!\!\!\!&&C_1\Bbb{E}\int_0^t\big\|X^\varepsilon(s)-X^{\varepsilon'}(s)\big\|^2_{F^*_{1,2}}ds\nonumber\\
&&+C_2(\varepsilon+\varepsilon')\Bbb{E}\int_0^t\big(|\Psi(X^\varepsilon(s))|^2_2+|\Psi(X^{\varepsilon'}(s))|_2^2\big)ds.
\end{eqnarray}
Note that if the initial value $x\in F^*_{1,2}$ and \eqref{eq:2} is satisfied, we have \eqref{eqn43}. If $x\in L^2(\mu)$, we have \eqref{eqn7}, then \textbf{(H1)} implies that there exists a positive constant C such that
$$\sup_{\kappa\in(0,1)}\Bbb{E}\int_0^t|\Psi(X^\kappa(s))|^2_2ds\leq C.$$
Hence, by Gronwall's inequality and Young's inequality, we know that there exists a positive constant $C\in(0,\infty)$ which is independent of $\varepsilon, \varepsilon'$ such that
\begin{eqnarray}\label{eqn49}
&&\mathbb{E}\Big[\sup_{s\in[0,T]}\big\|X^\varepsilon(s)-X^{\varepsilon'}(s)\big\|^2_{F^*_{1,2}}\Big]+\mathbb{E}\int_0^T\big|\Psi(X^\varepsilon(s))-\Psi(X^{\varepsilon'}(s))\big|_2^2ds\nonumber\\
\leq\!\!\!\!\!\!\!\!&&C(\varepsilon+\varepsilon').
\end{eqnarray}
Hence, there exists an $\mathcal{F}_t$-adapted process $X\in L^2(\Omega;L^\infty([0,T];F^*_{1,2}))$ such that $X\in D([0,T];F^*_{1,2})$, $\Bbb{P}$-a.s., and $X^\varepsilon\rightarrow X$ in $L^2(\Omega;L^\infty([0,T]; F^*_{1,2}))$ as $\varepsilon\rightarrow0$. Furthermore, from Claim \ref{claim1}, we know that $X\in L^2([0,T]\times\Omega;L^2(\mu))$.

\vspace{2mm}
Using the similar argument as in Claim \ref{claim3}, we know that $X$ satisfies \eqref{eq:1} and $\int_0^\cdot\Psi(X(s))ds\in C([0,T];F_{1,2})$, $\Bbb{P}$-a.s.. This completes the existence proof for Theorem \ref{Th1}.

\vspace{2mm}
\textbf{Uniqueness}

Suppose $X_1$ and $X_2$ are two solutions to \eqref{eq:1}, we have $\Bbb{P}$-a.s.,
\begin{eqnarray}\label{eqn50}
&&\!\!\!\!\!\!\!\!X_1(t)-X_2(t)-L\int_0^t\Psi(X_1(s))-\Psi(X_2(s))ds\nonumber\\
=&&\!\!\!\!\!\!\!\!\int_0^t\int_{Z}\big(f(s,X_1(s-),z)-f(s,X_2(s-),z)\big)\widetilde{N}(ds,dz),\ \forall t\in [0,T].
\end{eqnarray}
Rewrite \eqref{eqn50} as following
\begin{eqnarray}\label{eqn51}
&&\!\!\!\!\!\!\!\!X_1(t)-X_2(t)+(1-L)\int_0^t\Psi(X_1(s))-\Psi(X_2(s))ds\nonumber\\
=&&\!\!\!\!\!\!\!\!\int_0^t\Psi(X_1(s))-\Psi(X_2(s))ds\nonumber\\
&&\!\!\!\!\!\!\!\!+\int_0^t\int_{Z}\big(f(s,X_1(s-),z)-f(s,X_2(s-),z)\big)\widetilde{N}(ds,dz),\ \forall t\in [0,T].
\end{eqnarray}

Apply It\^{o}'s formula to $\|X_1(t)-X_2(t)\|^2_{F^*_{1,2}}$ in $F^*_{1,2}$, we have
\begin{eqnarray}\label{eqn52}
&&\big\|X_1(t)-X_2(t)\big\|^2_{F^*_{1,2}}+2\int_0^t\big\langle\Psi(X_1(s))-\Psi(X_2(s)),X_1(s)-X_2(s)\big\rangle_2ds\nonumber\\
=\!\!\!\!\!\!\!\!\!&&2\int_0^t\big\langle
\Psi(X_1(s))-\Psi(X_2(s)),X_1(s)-X_2(s)\big\rangle_{F^*_{1,2}}ds\nonumber\\
&&+2\int_0^t\int_{Z}\big\langle
X_1(s-)-X_2(s-),f(s,X_1(s-),z)-f(s,X_2(s-),z)\big\rangle_{F^*_{1,2}}\widetilde{N}(ds,dz)\nonumber\\
&&+\int_0^t\int_{Z}\big\|f(s,X_1(s-),z)-f(s,X_2(s-),z)\big\|^2_{F^*_{1,2}}N(ds,dz).
\end{eqnarray}

Taking expectation to both sides of \eqref{eqn52}, \eqref{eqn46} and \textbf{(H3)} yield that
\begin{eqnarray*}
&&\Bbb{E}\big\|X_1(t)-X_2(t)\big\|^2_{F^*_{1,2}}+2\widetilde{\alpha}\Bbb{E}\int_0^t\big|\Psi(X_1(s))-\Psi(X_2(s))\big|_2^2ds\nonumber\\
\leq\!\!\!\!\!\!\!\!\!&&2\Bbb{E}\int_0^t\big\|\Psi(X_1(s))-\Psi(X_2(s))\big\|_{F^*_{1,2}}\cdot
\big\|X_1(s)-X_2(s)\big\|_{F^*_{1,2}}ds\nonumber\\
&&+C_2\Bbb{E}\int_0^t\big\|X_1(s)-X_2(s)\big\|^2_{F^*_{1,2}}ds.
\end{eqnarray*}
Using Young's inequality to the above inequality, and since $L^2(\mu)\subset
F^*_{1,2}$ continuously and densely, we obtain
\begin{eqnarray*}
&&\Bbb{E}\big\|X_1(t)-X_2(t)\big\|^2_{F^*_{1,2}}+2\widetilde{\alpha}\Bbb{E}\int_0^t\big|\Psi(X_1(s))-\Psi(X_2(s))\big|_2^2ds\nonumber\\
\leq\!\!\!\!\!\!\!\!\!&&2\widetilde{\alpha}\Bbb{E}\int_0^t\big|\Psi(X_1(s))-\Psi(X_2(s))\big|_2^2ds+
\frac{1}{2\widetilde{\alpha}}\Bbb{E}\int_0^t\big\|X_1(s)-X_2(s)\big\|^2_{F^*_{1,2}}ds\nonumber\\
&&+C_2\mathbb{E}\int_0^t\big\|X_1(s)-X_2(s)\big\|^2_{F^*_{1,2}}ds.
\end{eqnarray*}
Therefore,
\begin{eqnarray*}
&&\mathbb{E}\big\|X_1(t)-X_2(t)\big\|^2_{F^*_{1,2}}\leq
(\frac{1}{2\widetilde{\alpha}}+C_2)\mathbb{E}\int_0^t\big\|X_1(s)-X_2(s)\big\|^2_{F^*_{1,2}}ds.
\end{eqnarray*}
By Gronwall's lemma, we get $X_1(t)=X_2(t)$, $\Bbb{P}\text{-a.s.}$, $\forall t\in[0,T]$. Consequently, Theorem \ref{Th1} is completely proved. \hspace{\fill}$\Box$
\end{proof}

\end{document}